\newcolumntype{P}[1]{>{\raggedright\arraybackslash}p{#1}}
\theoremstyle{plain}
\newtheorem{theorem}{Theorem}[section]
\newtheorem{remark}{Remark}[section]
\newtheorem{lemma}{Lemma}[section]
\newtheorem{proposition}{Proposition}[section]
\newtheorem{example}{Example}[section]
\numberwithin{equation}{section}
\newcommand{\R}{\mathbb{R}}
\newcommand{\N}{\mathbb{N}}
\newcommand{\E}{\mathbb{E}}
\title{A modified discrepancy principle to attain optimal convergence rates under unknown noise}
\author{ Tim Jahn\thanks{Institute of Mathematics, Goethe-University Frankfurt, Germany (\texttt{jahn@math.uni-frankfurt.de})}}
\begin{document}

\maketitle

\begin{abstract}
We consider a linear ill-posed equation in the Hilbert space setting. Multiple independent unbiased measurements of the right hand side are available. A natural approach is to take the average of the measurements as an approximation of the right hand side and to estimate the data error as the inverse of the square root of the number of measurements. We calculate the optimal convergence rate (as the number of measurements tends to infinity) under classical source conditions and introduce a modified discrepancy principle, which asymptotically attains this rate.\\
\textbf{Key words}: statistical inverse problems, discrepancy principle, spectral cut-off, convergence, optimality,
\end{abstract}

\section{Introduction}

So we aim to solve $K\hat{x}=\hat{y}$, where $K$ is compact with dense range and $\hat{x}$ and $\hat{y}$ are elements of infinite-dimensional Hilbert spaces $\mathcal{X}$ and $\mathcal{Y}$. The exact data $\hat{y}$ is unknown, but we have access to multiple and unbiased i.i.d. measurements $Y_1,...,Y_n$ with unknown arbitrary distribution  and finite variance ($\E\|Y_1-\hat{y}\|^2<\infty$). Note that at this point the measurements are infinite-dimensional objects (e.g. functions), we will later discretise along the singular vectors of the operator $K$. Repeating and averaging the measurement process is a standard engineering practice to estimate and reduce random uncertainties, see \cite{taylor1997introduction},\cite{bevington2003data} and \cite{lyons2004understanding} for introducing monographs  on the subject of error analysis from a practical view point. In the given setting, a natural estimator of the unknown data $\hat{y}$ is the sample mean 

\begin{equation*}
\bar{Y}_n = \frac{1}{n}\sum_{i=1}^n Y_i.
\end{equation*}

The compactness of $K$ implies that the equation is ill-posed, so that one cannot rely on classical direct methods like $LR$- or $UR$-decomposition to determine the (generalised) inverse of $K$. Regularisation is needed, and the inverse is replaced with a family of related but continuous approximations, e.g. Tikhonov or spectral cut-off regularisation. The particular choice of the approximation has to be based inevitably on knowledge of an upper bound of the true error $\delta_n^{true}:=\|\bar{Y}_n-\hat{y}\|$, as the famous result of Bakushinskii \cite{bakushinskii1984remarks} states. While the exact value of $\delta_n^{true}$ is clearly not given due to randomness, its variance depends mainly on the number of measurements,

\begin{equation*}
\E\left[{\delta_n^{true}}^2\right] = \E\|\bar{Y}_n-\hat{y}\|^2 = \frac{\E\|Y_1-\hat{y}\|^2}{n}.
\end{equation*}

Thus 

\begin{equation*}
\delta_n^{est}:=\frac{1}{\sqrt{n}}\quad\mbox{or}\quad \delta_n^{est}:=\frac{\sqrt{\frac{1}{n-1}\sum_{i=1}^n \|Y_i-\bar{Y}_n\|^2}}{\sqrt{n}}
\end{equation*}

are natural estimators of the unknown true error $\delta_n^{true}$. So a natural approach for the solution of the equation is to use the mean $\bar{Y}_n$ and the estimated data error $\delta_n^{est}$ together with a deterministic regularisation method. Indeed, in \cite{harrach2020beyond} it was verified, that the approach converges in a suitable sense for a large class of regularisation methods. See also \cite{harrach2020regularising} and \cite{mika2021towards}, where this approach was extended to settings involving white or Poissonian noise.
 The rate of convergence of a given regularisation is known to depend on a certain smoothness of the unknown solution $\hat{x}$ relative to the operator $K$. Classical convergence rates for deterministic noise (i.e. in a setting where one knows an upper bound for the norm of the noise) are deduced by a worst case error analysis. In our setting however, the noise, though random, typically excludes many 'bad' directions for a fixed unknown error distribution. So as it is typical under random noise, see e.g. \cite{bauer2008regularization}, \cite{blanchard2012discrepancy} or \cite{lu2014discrepancy}, the optimal rates obtained here should be substantially better than the ones one would expect from a deterministic worst case error analysis. Indeed, we show that the optimal rates here are better than for the deterministic worst case.  The main result of this work then constitutes of a modified discrepancy principle, which yields (almost) the best possible rate for arbitrary unknown error distributions.

 Denote by $(\sigma_j,u_j,v_j)_{j\in\N}$ the singular value decomposition of $K$ (i.e. $(u_j)_{j\in\N}$ is an orthonormal basis of $\mathcal{Y}$ (note that $K$ is assumed to have dense range), $(v_j)_{j\in\N}$ an orthonormal basis of $\mathcal{N}(K)^\perp$, $(\sigma_j)_{j\in\N}$ a monotone to 0 converging sequence of positive numbers and it holds that $K v_j = \sigma_j u_j$). 
In the following we will restrict to the spectral cut-off regularisation and to mildly ill-posed problems, i.e. we assume that there exists $q>0$ such that $\sigma_j^2 \asymp j^{-q}$. Thus the reconstruction will be based on the projections $(Y_i,u_j)$ for $i,j\in\N$. The unbiasedness assumption reads $\E[(Y_i,u_j)] = (\hat{y},u_j)$ for all $i,j\in\N$ and we moreover assume that there are $C_p>0, p>1$ with $\E[\left(Y_i- \hat{y},u_j\right)^2]\le C_pj^{-p}$. Later we will always consider only a finite number of components for a fixed number of measurements $n$. Spectral cut-off at truncation level $k$ for the component wise averages then yields the following estimator for $\hat{x}$
\begin{equation}\label{int:eq0}
\bar{X}_k^n:=\sum_{j=1}^k \frac{1}{\sigma_j}\left(\bar{Y}_n, u_j\right) v_j.
\end{equation}

In order to find a reasonable reconstruction the truncation level has to be determined dependent on the (estimated) noise level, which depends on the number of measurements $n$.

The rest of the paper is organised as follows. In the following Section \ref{sec:2} we state the three main Theorems, which are proven in Section \ref{sec:3}. The main result is accompanied by numerical experiments in Section \ref{sec:4} and the article ends with a short conclusion in Section \ref{sec:5}.

\section{Main results}\label{sec:2}

We derive convergence rates with respect to classical H\"older-type source conditions 

\begin{equation}\label{int:eq111}
\hat{x} \in \mathcal{X}_{\nu,\rho}:=\{ (K^*K)^{\nu/2}\xi~ |~\xi\in \mathcal{X}, \|\xi\|\le \rho\}=\left\{ \sum_{j=1}^\infty \sigma_j^{\nu}(\xi,v_j)v_j~|~\xi \in\mathcal{X}, \|\xi\|\le \rho\right\}.
\end{equation}

If $\hat{x}\in\mathcal{X}_{\nu,\rho}$, we say that $\hat{x}$ obeys smoothness $(\nu,\rho)$ relative to $K$. Via \eqref{int:eq0} a whole class of estimators indexed by $k\in\N$ is defined, which is also known under the term projection estimators (with respect to the singular value decomposition, see \cite{cavalier2011inverse}). The first result gives the optimal error bound for our estimators \eqref{int:eq0} on $\mathcal{X}_{\nu,\rho}$, where we measure performance by the integrated mean squared error (also called the minimax ($L^2$)-risk in this context). 

\begin{theorem}\label{opt1}
Let $\hat{y}:=K\hat{x}$. Assume that $Y_1,Y_2,...$ are i.i.d.  for $i=1,2,...$ with $\E[Y_1] = \hat{y}$. Moreover, assume that there are $q>0,p>1$ with $\sigma_j^2 \asymp j^{-q}$ and $\E\left(Y_{1} - \hat{y},u_j\right)^2 \asymp j^{-p}$. Then there holds

$$ \inf_{k\ge 1} \sup_{\hat{x}\in\mathcal{X}_{\nu,\rho}} \E\|\bar{X}^n_k - \hat{x}\|^2  \asymp \begin{cases}  \frac{1}{n} & q-p<-1\\
                     \frac{\log(n \rho)}{n} & q-p=-1\\
                     \rho^\frac{q+1-p}{(\nu+1)q+1-p}\left(\frac{1}{n}\right)^\frac{\nu}{\nu+1 - \frac{p-1}{q}} & q-p>-1
                     \end{cases}.$$

In particular, for the a priori choice 

$$k_n\asymp \begin{cases} \left(\rho n\right)^{\frac{1}{\nu q}} & q-p\le -1\\
                                \left(\rho n\right)^{\frac{1}{(1+\nu)q + 1-p}} & q-p>-1\end{cases}$$

it holds that

$$ \sup_{\hat{x}\in\mathcal{X}_{\nu,\rho}} \E\|\bar{X}^n_{k_n} - \hat{x}\|^2\asymp\inf_{k \ge 1} \sup_{\hat{x}\in\mathcal{X}_{\nu,\rho}} \E\|\bar{X}^n_k - \hat{x}\|^2. $$  
 
\end{theorem}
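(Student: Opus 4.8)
The plan is to perform a classical bias--variance decomposition of $\E\|\bar{X}_k^n-\hat{x}\|^2$, make both terms explicit up to multiplicative constants from the assumptions $\sigma_j^2\asymp j^{-q}$ and $\E(Y_1-\hat{y},u_j)^2\asymp j^{-p}$, and then minimise the resulting expression over the truncation level $k$. Since $(\hat{y},u_j)=\sigma_j(\hat{x},v_j)$ one has $\sigma_j^{-1}(\bar{Y}_n,u_j)=(\hat{x},v_j)+\sigma_j^{-1}(\bar{Y}_n-\hat{y},u_j)$, hence
\[
\bar{X}_k^n-\hat{x}=\sum_{j=1}^k\frac{1}{\sigma_j}(\bar{Y}_n-\hat{y},u_j)v_j-\sum_{j>k}(\hat{x},v_j)v_j .
\]
As the index sets $j\le k$ and $j>k$ are disjoint, the two sums are orthogonal pathwise, so $\|\bar{X}_k^n-\hat{x}\|^2$ splits without a cross term; taking expectations and using unbiasedness together with the i.i.d.\ property ($\E(\bar{Y}_n-\hat{y},u_j)^2=\tfrac{1}{n}\E(Y_1-\hat{y},u_j)^2$) gives the exact identity
\[
\E\|\bar{X}_k^n-\hat{x}\|^2=\frac{1}{n}\sum_{j=1}^k\frac{1}{\sigma_j^2}\E(Y_1-\hat{y},u_j)^2+\sum_{j>k}(\hat{x},v_j)^2 ,
\]
whose first (variance) term is independent of $\hat{x}$.

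Next I would evaluate both pieces up to constants. The variance term is $\asymp\frac{1}{n}\sum_{j=1}^k j^{q-p}$, and for $k$ large $\sum_{j=1}^k j^{q-p}\asymp 1$, $\asymp\log k$, $\asymp k^{q-p+1}$ according as $q-p<-1$, $q-p=-1$, $q-p>-1$. For the bias, membership $\hat{x}\in\mathcal{X}_{\nu,\rho}$ is exactly the constraint $\sum_j\sigma_j^{-2\nu}(\hat{x},v_j)^2\le\rho^2$, so monotonicity of $(\sigma_j)$ yields the two-sided bound
\[
\sup_{\hat{x}\in\mathcal{X}_{\nu,\rho}}\sum_{j>k}(\hat{x},v_j)^2=\rho^2\sigma_{k+1}^{2\nu}\asymp\rho^2 k^{-\nu q},
\]
with the supremum attained at $\xi=\rho v_{k+1}$. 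Since the variance term does not depend on $\hat{x}$, combining the two estimates yields, with constants depending only on $p,q,\nu$ and the $\asymp$-constants,
\[
\sup_{\hat{x}\in\mathcal{X}_{\nu,\rho}}\E\|\bar{X}_k^n-\hat{x}\|^2\asymp\frac{1}{n}\sum_{j=1}^k j^{q-p}+\rho^2 k^{-\nu q}.
\]

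It then remains to minimise the right-hand side over integers $k\ge1$. In each regime it is the sum of a nondecreasing and a strictly decreasing function of $k$, whose real-variable minimiser lies at the balance point of the two terms; this point tends to infinity with $n$, so for $n$ large the optimal integer level is obtained by rounding and equals $\asymp(\rho n)^{1/(\nu q)}$ when $q-p\le-1$ and $\asymp(\rho n)^{1/((\nu+1)q+1-p)}$ when $q-p>-1$. Substituting these choices into the displayed expression reproduces the three claimed rates (using $\log k_n\asymp\log(n\rho)$ in the boundary case $q-p=-1$), and this is at the same time the proof of the a priori statement, which asserts precisely that $k_n$ attains $\inf_{k\ge1}\sup_{\hat{x}\in\mathcal{X}_{\nu,\rho}}\E\|\bar{X}^n_k-\hat{x}\|^2$. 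The matching lower bound for the infimum is automatic, since the minimum over integers is bounded below by the real-variable minimum.

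I do not expect a genuine obstacle; the work is entirely bookkeeping. One has to make the relation $\asymp$ two-sided simultaneously in all three regimes — in particular verifying $\log\lfloor k_n\rfloor\asymp\log(n\rho)$ at $q-p=-1$ and that the rounded $k_n$ is a legitimate integer $\ge1$ once $n$ is large — and to keep track of the $\rho$-dependence through the optimisation, where the $\asymp$-constants may depend on $\rho$.
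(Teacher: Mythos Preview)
Your proposal is correct and follows essentially the same route as the paper: bias--variance decomposition, evaluation of the variance term as $\tfrac{1}{n}\sum_{j\le k}j^{q-p}$ and of the worst-case bias as $\asymp\rho^2 k^{-\nu q}$, followed by balancing over $k$ in the three regimes. The only visible difference is that you (correctly) carry $\rho^2$ through the bias term while the paper writes $\rho$; this affects only the exact $\rho$-exponent in the final rate and the stated $k_n$, not the method.
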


Note that under additional assumption, one can show that the rate from Theorem \ref{opt1} is (up to a constant factor) the optimal rate for all possible estimators, not just for projection estimators \eqref{int:eq0}. See e.g. \cite{pinsker1980optimal} and \cite{belitser1994minimax} for the case where $\left((Y_1-\hat{y},u_j)\right)_{j\in\N}$ are independent and Gaussian.

In view of the fact that the optimal worst case error bound for deterministic noise level $1/\sqrt{n}$ under the source condition $\hat{x}\in\mathcal{X}_{\nu,\rho}$ has order $(1/\sqrt{n})^\frac{\nu}{\nu+1}$, we see that the minimax risk attained by the oracle $k_n$ is in all cases strictly better. In particular, for $q-p< -1$, the problem is in fact well-posed.
However, the above choice $k_n$ requires knowledge of both the smoothness $\nu$ and the decay of variances $p$.  A plain use of the discrepancy principle \cite{morozov1968error} as an adaptive strategy to determine the truncation level would be to find $k=k(\bar{Y}_n,\delta_n^{est})$, such that the size of the residual is approximately equal to the estimated noise level, i.e. by the relation

\begin{equation}\label{int:eq1}
\sqrt{\sum_{j=k+1}^\infty (\bar{Y}_n,u_j)^2} \approx \delta_n^{est}.
\end{equation}

In \cite{harrach2020beyond} it was shown, that the choice \eqref{int:eq1} adapts to the unknown smoothness $\nu$ in the sense that asymptotically the optimal deterministic bound holds with a probability converging to $1$. According to Theorem \ref{opt1}, this is suboptimal. The reason is an intrinsic drawback of the plain discrepancy principle for statistical noise, which tempts to stop too late.  We therefore consider in this work a modified version of the discrepancy principle, which also takes information about the stochastic nature of the noise into account.

We first formulate a simplified version of the main result to illustrate the approach. As already mentioned, the rate of convergence depends on certain smoothness properties of the real unknown solution $\hat{x}$ relative to the forward operator $K$, see e.g. \eqref{int:eq111}. The general idea is to rescale the operator $K$ with a weighting operator $S$, such that the smoothness of $\hat{x}$ relative to the rescaled operator $SK$ is better than the original one relative to $K$. In order to avoid distinction of several cases, let us assume for a moment that $q-p>-1$ additional to the assumptions of Theorem \ref{opt1}. Moreover, we assume that $p$ is known to us. The latter is a serious restriction, which will be dropped in the main result Theorem \ref{opt2} below. However, there are settings, where this knowledge is justified, see Example \ref{int:ex1} at the end of this section. For any $\varepsilon>0$ with $p>1+\varepsilon$ we define the (linear and unbounded) weighting operator $S$ as the linear extension of

\begin{align*}
S:\mathcal{D}(S)\subset \mathcal{Y}&\to \mathcal{Y}\\
                                  u_j&\mapsto d_j u_j := j^\frac{p-1-\varepsilon}{2} u_j,\quad j\in\N
\end{align*}
 on
 
  $$\mathcal{D}(S):=\left\{\sum_{j=1}^\infty \alpha_j u_j~:~\sum_{j=1}^\infty \alpha_j^2d_j^2<\infty\right\}.$$
 
Since $q-p>-1$, we directly see that $SK:\mathcal{X}\to\mathcal{Y}$ is compact, with  singular values $d_j \sigma_j \asymp j^{-\frac{q+1+\varepsilon-p}{2}}$ and the same singular bases $(v_j)_{j\in\N}$ and $(u_j)_{j\in\N}$ as $K$. Now assume that $\hat{x}$ obeys smoothness $(\nu,\rho)$ relative to $K$, i.e. there exists $\xi\in\mathcal{Y}$ with $\hat{x}=(K^*K)^\frac{\nu}{2}\xi$ and $\|\xi\|\le \rho$. Let $\nu':=\frac{q}{q+1+\varepsilon-p}\nu$. Note that $\nu'>\nu$, since we assumed that $p>1+\varepsilon$. Then

$$\hat{x}=(K^*K)^\frac{\nu}{2}\xi = \sum_{j=1}^\infty \sigma_j^{\nu}(\xi,v_j)v_j = \sum_{j=1}^\infty (d_j\sigma_j)^{\nu'} \frac{\sigma_j^{\nu-\nu'}}{d_j^{\nu'}}(\xi,v_j)v_j = \left((SK)^*SK\right)^\frac{\nu'}{2}\xi',$$

with $\xi'=\sum_{j=1}^\infty\frac{\sigma_j^{\nu-\nu'}}{d_j}(\xi,v_j)v_j$ and 

$$\|\xi'\|^2 =\sum_{j=1}^\infty \left(\frac{\sigma_j^{\nu-\nu'}}{d_j^{\nu'}}\right)^2(\xi,v_j)^2 \asymp \sum_{j=1}^\infty \frac{j^{-q(\nu-\nu')}}{j^{(p-1-\varepsilon)\nu'}}(\xi,v_j)^2 = \sum_{j=1}^\infty j^{-q\nu\left(1-\frac{q}{q+1+\varepsilon-p}\right)-\frac{p-1-\varepsilon}{q+1+\varepsilon-p}q\nu}(\xi,v_j)^2 = \sum_{j=1}^\infty (\xi,v_j)^2 \le \rho^2,$$

therefore $\hat{x}$ obeys smoothness $(\nu',c \rho)$ relative to $SK$ (with a constant $c>0$). Moreover, the rescaled measurements $SY_1, SY_2,...$ are unbiased estimators of $S\hat{y}$ with finite variance

\begin{equation}\label{int:eq1aa}
\E\|SY_1-S\hat{y}\|^2 = \sum_{j=1}^\infty d_j^2\E(Y_1-\hat{y},u_j)^2 \asymp \sum_{j=1}^\infty j^{p-1-\varepsilon} j^{-p} = \sum_{j=1}^\infty j^{-(1+\varepsilon)}<\infty.
\end{equation}

Our modification of the discrepancy principle is, that we apply it not to the unscaled operator and measurements $K$ and $\bar{Y}_n$, but to the rescaled ones $SK$ and $S\bar{Y}_n$ (note that $SK$ also has dense range). Consequently, the stopping index $k_n$ is the solution of the equation

\begin{equation}\label{int:eq1a}
\sqrt{\sum_{j=k+1}^\infty (S\bar{Y}_n,u_j)^2} = \sqrt{\sum_{j=k+1}^\infty d_j^2(\bar{Y}_n,u_j)^2} \approx {\delta_n^{est}}',
\end{equation}

with

\begin{equation*}
{\delta_n^{est}}':=\frac{1}{\sqrt{n}}\quad\mbox{or}\quad {\delta_n^{est}}':=\frac{\sqrt{\frac{1}{n-1}\sum_{i=1}^n \|SY_i-S\bar{Y}_n\|^2}}{\sqrt{n}}.
\end{equation*}

The following theorem states, that up to $\varepsilon$ the optimal bound from Theorem \ref{opt1} holds with a probability converging to $1$ as $n\to\infty$ using this strategy. Note that convergence in mean squared error cannot be expected for the discrepancy principle, see \cite{harrach2020beyond}. However, adding to the procedure a so-called "emergency stop" (see \cite{blanchard2012discrepancy}, \cite{harrach2020beyond}) might allow to deduce rates in mean squared error, but we will leave this as a future work and focus on the rates in probability.

\begin{theorem}\label{opt1a}
Let $\hat{y}=K\hat{x}$ and assume that $Y_1,Y_2,...$ are i.i.d. for $i=1,2,...$ with $\E[Y_1] = \hat{y}$. Moreover, assume that there are $q>0,p>1$ with $\sigma_j^2 \asymp j^{-q}$ and $\E\left(Y_{1} - \hat{y},u_j\right)^2 \asymp j^{-p}$ and $q>p-1$. Let $\varepsilon>0$ such that $p>1+\varepsilon$ and assume that $\hat{x}\in\mathcal{X}_{\nu,\rho}$. Then there exists $L>0$, such that for $k_n$ the solution of \eqref{int:eq1a} there holds

\begin{equation}\label{int:eq1b}
\mathbb{P}\left(\|\bar{X}^n_{k_n}-\hat{x}\|\le L \rho^\frac{1}{1+\nu'}\left(\frac{1}{\sqrt{n}}\right)^\frac{\nu'}{\nu'+1}\right)\to 1
\end{equation}
 
 as $n\to\infty$.
\end{theorem}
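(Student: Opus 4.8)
The plan is to reduce Theorem~\ref{opt1a} to the already established behaviour of the plain discrepancy principle \eqref{int:eq1}, applied not to $K$ but to the rescaled operator $SK$. The crucial observation is that the spectral cut-off estimator is invariant under multiplication of operator and data by $S$: since $(S\bar{Y}_n,u_j)=d_j(\bar{Y}_n,u_j)$ and the singular values of $SK$ are $d_j\sigma_j$, one has
\begin{equation*}
\sum_{j=1}^k\frac{1}{d_j\sigma_j}(S\bar{Y}_n,u_j)v_j=\sum_{j=1}^k\frac{1}{\sigma_j}(\bar{Y}_n,u_j)v_j=\bar{X}^n_k .
\end{equation*}
Hence \eqref{int:eq1a} is exactly the plain discrepancy principle for the transformed equation $SK\hat{x}=S\hat{y}$ with transformed data $SY_1,SY_2,\dots$ and noise-level estimate ${\delta_n^{est}}'$, and the reconstruction it returns is the same $\bar{X}^n_{k_n}$.

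It remains to check that the transformed problem falls into the framework under which the plain discrepancy principle is known (cf.\ \cite{harrach2020beyond}) to asymptotically attain the optimal deterministic rate with probability tending to one, and then to invoke that result. Put $q':=q+1+\varepsilon-p$. Then $SK$ is compact with dense range, has the same singular vectors as $K$ and singular values $d_j\sigma_j\asymp j^{-q'/2}$, so it is again mildly ill-posed (here $q'>0$ because $q-p>-1$, and $q'>\varepsilon$, which is the standing assumption $q>p-1$). The $SY_i$ are i.i.d.\ and unbiased for $S\hat{y}$, with componentwise variances $\E(SY_1-S\hat{y},u_j)^2=d_j^2\,\E(Y_1-\hat{y},u_j)^2\asymp j^{-(1+\varepsilon)}$, hence with finite total variance $V:=\E\|SY_1-S\hat{y}\|^2<\infty$ by \eqref{int:eq1aa}; moreover ${\delta_n^{est}}'$ controls $\|S\bar{Y}_n-S\hat{y}\|$ asymptotically because $\E\|S\bar{Y}_n-S\hat{y}\|^2=V/n$ (for ${\delta_n^{est}}'=1/\sqrt n$ one combines Chebyshev's inequality with the safety factor built into the principle, while the empirical choice estimates $\sqrt V/\sqrt n$ consistently by the law of large numbers). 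Finally, as shown in the discussion preceding the theorem, $\hat{x}$ obeys smoothness $(\nu',c\rho)$ relative to $SK$ with $\nu'=\frac{q}{q'}\nu>\nu$. Applying the cited adaptivity result to the transformed data then gives, with probability $\to1$, $\|\bar{X}^n_{k_n}-\hat{x}\|\le L'(c\rho)^{1/(1+\nu')}(1/\sqrt n)^{\nu'/(\nu'+1)}$, and absorbing $c^{1/(1+\nu')}$ into the constant yields \eqref{int:eq1b}.

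If a self-contained argument is preferred over citing \cite{harrach2020beyond} verbatim, I would run the standard deterministic discrepancy-principle analysis directly on the transformed data (as in \cite{blanchard2012discrepancy}, \cite{harrach2020beyond}). Restricting, as announced in the introduction, to the first $N_n$ components, split the transformed residual $\sum_{j>k}(S\bar{Y}_n,u_j)^2$ into the monotone deterministic part $\sum_{j>k}(d_j\sigma_j)^2(\hat{x},v_j)^2\lesssim\rho^2(d_k\sigma_k)^{2(1+\nu')}$ and a stochastic part with expectation $\asymp n^{-1}\sum_{j>k}j^{-(1+\varepsilon)}\lesssim n^{-1}$. A Markov/union bound over the indices $j\le k_n^{*}$, where $k_n^{*}\asymp(\rho^2 n)^{1/((1+\nu')q')}$ is the oracle index at which the deterministic part of the residual meets $1/n$, localises the data-driven stopping index $k_n$ in a neighbourhood of $k_n^{*}$ on an event of probability $\to1$. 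On that event the reconstruction bias satisfies $\sum_{j>k_n}(\hat{x},v_j)^2\lesssim\rho^2(d_{k_n}\sigma_{k_n})^{2\nu'}\asymp\rho^{2/(1+\nu')}(1/\sqrt n)^{2\nu'/(\nu'+1)}$, while the propagated noise $\sum_{j\le k_n}\sigma_j^{-2}(\bar{Y}_n-\hat{y},u_j)^2$ is of the same order or smaller by a further Markov bound; together these give \eqref{int:eq1b} up to the $\varepsilon$ hidden in $q'$ and a constant.

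The main obstacle, whichever route one takes, is the handling of the stochastic noise level and the localisation of the stopping index: establishing, with probability tending to one and under only the finite-variance assumption (no distributional shape, no sub-Gaussianity), the simultaneous control of the growing number $\asymp k_n^{*}$ of stochastic components that is needed to pin $k_n$ near the oracle index. The rescaling is precisely what makes this feasible, since it turns the summable weights $d_j^2 j^{-p}=j^{-(1+\varepsilon)}$ into a finite total variance, so that the stochastic part of the transformed residual is $O(1/n)$ uniformly in the cut-off level; but the uniform-in-$j$ concentration still has to be carried out carefully (truncation to $N_n$ components together with Markov's inequality on the relevant weighted sums). By contrast, the invariance of $\bar{X}^n_k$ under $S$ and the verification of the smoothness $(\nu',c\rho)$ relative to $SK$ are entirely routine.
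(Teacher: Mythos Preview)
Your proposal is correct and matches the paper's own approach: the paper proves Theorem~\ref{opt1a} in a single sentence, stating that it is an immediate consequence of Theorem~1.2.4 from \cite{jahn2021regularising} (a refinement of Theorem~4 in \cite{harrach2020beyond}) applied to $SK$ and $S\bar{Y}_n$, with the verification of the hypotheses (compactness and mild ill-posedness of $SK$, finite variance of $SY_i$, and smoothness $(\nu',c\rho)$ of $\hat{x}$ relative to $SK$) already carried out in the discussion preceding the theorem. Your explicit remark that $\bar{X}^n_k$ is invariant under the rescaling, and your optional self-contained sketch, go beyond what the paper spells out but are consistent with it.
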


Theorem \ref{opt1a} is an immediate consequence of Theorem 1.2.4 from \cite{jahn2021regularising} (which is a refined version of Theorem 4 of \cite{harrach2020beyond}) applied to $SK$ and $S\bar{Y}_n$. A quick calculation reveals, that $\frac{\nu'}{\nu'+1} = \frac{\nu}{\nu+1-\frac{p-1-\varepsilon}{q}}$, thus up to $\varepsilon>0$ we get the optimal rate from Theorem \ref{opt1}. Note however, that the smaller we choose $\varepsilon$, the slower will be the convergence to $1$ in \eqref{int:eq1b}.

Now we generalise the above result in several ways. We relax the condition $\E(Y_1-\hat{y},u_j)^2\asymp j^{-p}$ to $\E(Y_1-\hat{y},u_j)^2 \le C_p j^{-p}$ (for some $C_p>0$). Most importantly, the exponent $p$ is no longer assumed to be known. Moreover, we account for the fact that in practice we can only measure a finite number of components. The component-wise variances $\E[(Y_1-\hat{y},u_j)^2]$ will be estimated from the multiple measurements and are then used to determine the (now random) rescaling weights $d_{j,n}$. Consequently, the weighting operator $S_n$ is now also random and depends on the samples $Y_1,...,Y_n$. The precise implementation of the modified discrepancy principle (with ad hoc unknown decay of the component-wise variances) is given in Algorithm 1. Hereby, the factor $\sum_{j=1}^{m_n}s_{n,j}^2$ is in essence a normalisation by $\E\|Y_1-\hat{y}\|^2$. The two arguments of the $\min(\cdot)$ function can be roughly interpreted as follows: the first one assures, that the rescaled measurements still have finite variance (c.f. \eqref{int:eq1aa}), while the second one assures, that the rescaled operator is still bounded. We state now the main result, which confirms that the optimal bound from Theorem \ref{opt1} holds with a probability converging to $1$ as $n\to\infty$ (up to discretisation and $\varepsilon_2>0$ arbitrary small in the exponent) using this strategy.



\begin{theorem}\label{opt2}
Let $K\hat{x}=\hat{y}$ and $0<\varepsilon_1,\varepsilon_2<1$. Assume that $Y_1, Y_2,...$ are i.i.d with $\E[Y_1] = \hat{y}$. Moreover, there are $q,p$ with $q>p-1>0$ and $C_p,C>0$ with $\sigma_j^2 \asymp j^{-q}$ and $\E[\left(Y_{1} - \hat{y},u_j\right)^2]\le C_pj^{-p}$ and $\E[\left(Y_{1}-\hat{y},u_j\right)^4]/\left(\E\left[\left(Y_1-\hat{y},u_j\right)^2\right]\right)^2\le C$ for all $j\in\N$. Assume that $\hat{x}\in\mathcal{X}_{\nu,\rho}$ for $\nu,\rho>0$ and let $k_n$ be the stopping index of the modified discrepancy principle  as implemented in Algorithm 1 with $0<\varepsilon_1<1$ and $0<\varepsilon_2<p-1$ and $Y_1,...,Y_n$. Then there is a $L>0$ such that there holds

\begin{equation}\label{int:eq1ba}
\lim_{n\to\infty}\mathbb{P}\left( \|\bar{X}^n_{k_n}- \hat{x}\| \le L \max\left( \rho^\frac{q+1+\varepsilon_2-p}{(\nu+1)q+1+\varepsilon_2-p} \left(\frac{1}{\sqrt{n}}\right)^\frac{\nu}{\nu+1-\frac{p-1-\varepsilon_2}{q}}, \rho \left(\frac{1}{\sqrt{n}}\right)^{-(1-\varepsilon_1)q\nu}\right)\right)=1.
\end{equation}

\end{theorem}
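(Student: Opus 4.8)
The plan is to deduce Theorem~\ref{opt2} from the adaptive convergence result for the plain discrepancy principle under statistical noise — Theorem~\ref{opt1a} above, equivalently Theorem~1.2.4 of \cite{jahn2021regularising} — applied not to $K,\bar Y_n$ but to the \emph{random} rescaled operator $S_nK$ and the rescaled averaged data $S_n\bar Y_n$, truncated to the first $m_n$ components as in Algorithm~1, and then to re-express the resulting rate (which is stated through the smoothness $\nu'$ of $\hat x$ relative to $S_nK$) in terms of $\nu,\rho,q,p,\varepsilon_2$. Three ingredients are needed: (i) with probability tending to one the data-driven weights $d_{j,n}$ behave, up to fixed multiplicative constants, like the deterministic weights $d_j=j^{(p-1-\varepsilon_2)/2}$ used before Theorem~\ref{opt1a}, cut off by the $\min(\cdot)$ in Algorithm~1; (ii) on that event the rescaled problem has the structural properties exploited in the computation preceding Theorem~\ref{opt1a}: $S_nK$ is compact with singular values $\asymp j^{-(q+1+\varepsilon_2-p)/2}$ for $j\le m_n$, the rescaled noise has bounded variance so that after the built-in normalisation $\sum_{j\le m_n}s_{n,j}^2\approx\E\|Y_1-\hat y\|^2$ one has ${\delta_n^{est}}'\asymp 1/\sqrt n$, and $\hat x$ obeys smoothness $(\nu',c\rho)$ relative to $S_nK$ with $\nu'=\frac{q}{q+1+\varepsilon_2-p}\nu>\nu$; and (iii) truncating at $m_n$ contributes precisely the second term in the maximum in \eqref{int:eq1ba}.

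For (i), write $s_{n,j}^2$ for the empirical componentwise variances from Algorithm~1. The assumption $\E[(Y_1-\hat y,u_j)^4]\le C\,(\E[(Y_1-\hat y,u_j)^2])^2$ lets Chebyshev's inequality control, for each fixed $j$, the fluctuation of $s_{n,j}^2$ around $\E[(Y_1-\hat y,u_j)^2]$ at rate $n^{-1/2}$; a union bound over $j\le m_n$ — which is what forces $m_n$ to grow at most like a fixed power of $n$, as Algorithm~1 arranges — produces a good event $A_n$ with $\PP(A_n)\to 1$ on which $s_{n,j}^2$ is within a constant factor of $\E[(Y_1-\hat y,u_j)^2]\le C_p j^{-p}$ simultaneously for all $j\le m_n$. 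On $A_n$ the two branches of the $\min$ in Algorithm~1 then sandwich the weights: the branch that guarantees finite rescaled variance (cf.\ \eqref{int:eq1aa}) keeps $d_{j,n}^2$ of order at most $j^{p-1-\varepsilon_2}$, the branch that guarantees boundedness of $S_nK$ keeps $d_{j,n}^2\sigma_j^2$ bounded (this is where $\varepsilon_1$ enters), and when the first branch is active it is bounded below by a constant multiple of $j^{p-1-\varepsilon_2}$. Hence on $A_n$, up to constants, $S_n$ acts like the truncated deterministic weighting of before Theorem~\ref{opt1a}.

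Granting this, (ii) is the displayed computation preceding Theorem~\ref{opt1a} run with $d_j$ replaced by $d_{j,n}$ and the sums truncated at $m_n$, giving $S_nK$ its singular-value asymptotics, the bound $\E\|S_nY_1-S_n\hat y\|^2\lesssim\sum_{j\le m_n}j^{-(1+\varepsilon_2)}=O(1)$, and $\hat x=((S_nK)^*S_nK)^{\nu'/2}\xi'$ with $\|\xi'\|\le c\rho$. One then applies the adaptive discrepancy-principle result of \cite{harrach2020beyond,jahn2021regularising} to $S_nK,S_n\bar Y_n$ at noise level $\asymp 1/\sqrt n$: with probability $\to 1$ its stopping index $k_n$ obeys $\|\bar X^n_{k_n}-\hat x\|\lesssim(c\rho)^{1/(1+\nu')}(1/\sqrt n)^{\nu'/(\nu'+1)}$, and the elementary identities $\frac{\nu'}{\nu'+1}=\frac{\nu}{\nu+1-\frac{p-1-\varepsilon_2}{q}}$, $\frac{1}{1+\nu'}=\frac{q+1+\varepsilon_2-p}{(\nu+1)q+1+\varepsilon_2-p}$ turn this into the first argument of the maximum in \eqref{int:eq1ba}; since spectral cut-off at level $k_n$ uses the same singular basis and the same truncation index whether applied to $K,\bar Y_n$ or to $S_nK,S_n\bar Y_n$, the estimator \eqref{int:eq0} is unchanged. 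The genuine obstacle is that, contrary to Theorem~\ref{opt1a}, here $S_n$ is built from the same sample $Y_1,\dots,Y_n$, so $S_n\bar Y_n-S_n\hat y$ is not an average of i.i.d.\ vectors and the rescaled residual $\sqrt{\sum_{j>k}d_{j,n}^2(\bar Y_n,u_j)^2}$ couples the weights to the data; one circumvents this by carrying out the proof of the adaptive result \emph{on} $A_n$, where the $d_{j,n}$ enter only through the two-sided bounds of step (i) and not through any independence, so the concentration estimates for the rescaled residual and for ${\delta_n^{est}}'$ go through against deterministic comparison weights, all events being intersected with $A_n$ and $\PP(A_n)\to 1$ removing the conditioning in the limit. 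Making this decoupling rigorous while keeping the union bound in (i) compatible with the polynomial growth of $m_n$ is the part requiring the most care.

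It remains to handle (iii). Split $\|\bar X^n_{k_n}-\hat x\|$ into the error of the procedure without any discretisation constraint — controlled by the first term of the maximum via (i)--(ii) — and the error due to having only $m_n$ components available. In the regime where that first term dominates, the ideal stopping index lies below $m_n$ and nothing more is needed; otherwise $k_n$ is capped at $m_n$ and the additional bias is at most $\rho\,\sigma_{m_n}^{\nu}$, which by the choice of $m_n$ in Algorithm~1 equals the second argument of the maximum in \eqref{int:eq1ba} (with $\varepsilon_1$ entering through that choice). Taking the larger of the two regimes and adding the $o(1)$ failure probabilities from (i), (ii) and the cap yields \eqref{int:eq1ba}.
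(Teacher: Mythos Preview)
Your high-level strategy --- freeze the random weights $d_{j,n}$ on a high-probability event where they are sandwiched by deterministic weights, then run the discrepancy-principle analysis against those deterministic weights --- is exactly the route the paper takes. Two points, however, separate your sketch from a complete proof and from what the paper actually does.

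First, your step~(ii) asserts that on the good event $S_nK$ has singular values $\asymp j^{-(q+1+\varepsilon_2-p)/2}$ and that $d_{j,n}^2$ is of order $j^{p-1-\varepsilon_2}$. This is not guaranteed: Theorem~\ref{opt2} assumes only the \emph{one-sided} bound $\E(Y_1-\hat y,u_j)^2\le C_pj^{-p}$, so the empirical variances can decay arbitrarily fast and the first branch of the $\min$ in Algorithm~1 can be arbitrarily large, after which the second branch merely forces $d_{j,n}\sigma_j$ to be non-increasing (so $d_{j,n}\le 1/\sigma_j$), not any specific polynomial rate. Consequently you cannot invoke Theorem~1.2.4 of \cite{jahn2021regularising} even heuristically, because its hypotheses on the singular-value decay of the (rescaled) operator need not hold. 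The paper never uses the asymptotic you claim; instead it defines the deterministic limit weights $d_j$ via the same $\min$-recursion, proves only $d_j\le 1/\sigma_j$, $\inf_j d_j>0$ and $d_j\to\infty$ (Proposition~3.1), and from $d_j\le 1/\sigma_j$ alone derives $\sigma_j^{\nu-\nu'}/d_j^{\nu'}\le d_j^{-\nu}\le(\inf d_j)^{-\nu}$, which is enough to get smoothness $(\nu',c\rho)$ relative to the rescaled operator and to push the H\"older/interpolation argument through.

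Second, when you ``carry out the proof of the adaptive result on $A_n$'' you will need the key residual estimate
\[
\mathbb P\!\left(\sqrt{\textstyle\sum_{j\ge k_n}d_{j,n}^2(\bar Y_n-\hat y,u_j)^2}\le \tfrac12{\delta_n^{est}}'\right)\to 1,
\]
whose proof (Lemma~3.1 in the paper) relies on first showing $k_n\to\infty$ in probability. That step uses the existence of infinitely many $j$ with $(\hat y,u_j)\neq 0$. If $(\hat y,u_j)=0$ for all $j\ge J$, then $k_n$ need not diverge and the lemma fails; the paper therefore splits into two cases. In the ``finite support'' case it exploits that $\hat x$ then has \emph{arbitrary} smoothness relative to $SK$ (choose any $\nu''>\nu'$) and bounds both the bias and the data-propagation error directly, without the residual lemma. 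Your outline does not anticipate this bifurcation, and without it the argument for the data-propagation error (which in Case~1 hinges on $k_n\to\infty$ to control $1/(d_{k_n}\sigma_{k_n})$) breaks down.

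In short: the decoupling idea is right and matches the paper, but you should (a) work with the actual limit weights and their crude inequalities rather than the unproven polynomial asymptotics, and (b) build in the case distinction on whether $\hat y$ has finite spectral support.
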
 

The proof of Theorem \ref{opt2} is substantially more difficult than the one of Theorem \ref{opt1a}, mostly due to the dependence of the rescaling operator $S_n$ on the (realisations of) the measurements $Y_1,...,Y_n$. In particular, the $S_nY_1,...,S_nY_n$ are not independent and hence we cannot simply apply the results from \cite{harrach2020beyond}.

 \begin{algorithm}\label{algorithm1}
 \caption{Modified discrepancy principle with estimated data error}
\begin{algorithmic}[1]
\STATE Given measurements $(Y_i,u_j)$ with $i=1,..,n$ and $j=1,...,\lfloor n^{1-\varepsilon_1}\rfloor$;
\STATE \textit{Estimate variances}
\STATE Set $s_{j,n}^2:= \frac{1}{n-1}\sum_{i=1}^n\left( Y_{i} - \bar{Y}_n,u_j\right)^2$;
\STATE \textit{Calculate weights}
\STATE Set $d_{1,n}:=\sqrt{\min\left(\frac{\sum_{j'=1}^{\lfloor n^{1-\varepsilon_1}\rfloor} s_{j',n}^2}{s_{1,n}^2},\frac{1}{\sigma_1^2}\right)}$;
\FOR{ $j=2,...,\lfloor n\rfloor^{1-\varepsilon_1}$}
\STATE Set $d_{j,n}:=\sqrt{\min\left( \frac{j^{-(1+\varepsilon_2)}}{s_{j,n}^2}\sum_{j'=1}^{\lfloor n^{1-\varepsilon_1}\rfloor} s_{j',n}^2, \frac{\sigma_{j-1}^{2}}{\sigma_{j}^2}d_{j-1,n}^2\right) }$;
\ENDFOR
\STATE \textit{Apply discrepancy principle to rescaled measurements}
\STATE Set ${\delta_n^{est}}':= \sqrt{\frac{\sum_{j=1}^{\lfloor n^{1-\varepsilon_1}\rfloor} d_{j,n}^2 s_{j,n}^2 }{n}}$;
\STATE $k=0$
  \WHILE{$\sqrt{\sum_{j=k+1}^{\lfloor n^{1-\varepsilon_1}\rfloor} d_{j,n}^2\left( \bar{Y}_n,u_{j}\right)^2} > {\delta_n^{est}}'$}
  \STATE $k=k+1$;
  \ENDWHILE
  \STATE $k_n=k$;
\end{algorithmic}
\end{algorithm}

\begin{remark}
 Algorithm 1 could be applied in a general setting, e.g. also to severely ill-posed problems. The weights $d_{j,n}$ are defined such that $$d_{1,n}\sigma_1\ge d_{2,n}\sigma_2 \ge ...$$
  Note that a chosen $\varepsilon_2$ fulfills the condition $\varepsilon_2<p-1$, if $\lim_{j\to\infty}\lim_{n\to\infty} d_{j,n} = \infty$ and the latter can be checked to verify, that $\varepsilon_2$ was chosen sufficiently small. The assumption $q>p-1$ is only made for convenience and is not restrictive, since if $\E(Y_1-\hat{y},u_j)^2\le C_pj^{-p}$ there also holds that $\E(Y_1-\hat{y},u_j)^2\le C_p j^{-p'}$ for all $p'\le p$. 
\end{remark}

The second argument of the maximum in \eqref{int:eq1ba} is a discretisation error due to the usage of only finitely many singular vectors. If the latter is negligible, i.e. if $\frac{\nu}{\nu+1}<(1-\varepsilon)q\nu$, the rate from Theorem \ref{opt2} is better than the deterministic worst-case rate from \cite{harrach2020beyond}.
The additional assumption $\sup_{j\in\N} \frac{\E[\left(Y_1-\hat{y},u_j\right)^4]}{\left(\E[(Y_1-\hat{y},u_j)^2]\right)^2}<\infty$ assures that the component distribution are not too degenerated.  This is clearly fulfilled, if $\E(Y_1-\hat{y},u_j) \stackrel{d}{=} c_jZ$ for some $Z$ with $\E[Z]=0$, $E[Z^4]<\infty$ and $(c_j)_{j\in\N}\subset \R \setminus\{0\}$ (e.g. this holds under Gaussian noise). In particular no independence between the components is required. Our assumption of finite variance ($\E\|Y_1-\hat{y}\|^2<\infty$) excludes a direct application to white noise scenarios. The following example shows, how to adapt the approach for Hilbert-Schmidt operators under white noise.

\begin{example}\label{int:ex1}

Consider the equation $A\hat{x}=\hat{z}$ for $A:\mathcal{X}\to\mathcal{Y}$ Hilbert-Schmidt and assume there is a $q>1$ such that $\sigma_j(A)^2 \asymp j^{-q}$. Assume that the measurements are corrupted by i.i.d centered Hilbert-space processes $Z_1, Z_2,...$ (operating on $\mathcal{Y}$). I.e., the $Z_i:\mathcal{Y}\to L^2(\Omega,\mathcal{A},\mathbb{P})$ are bounded linear operators from $\mathcal{Y}$ to the space of square-integrable real-valued random variables (on some probability space $(\Omega,\mathcal{A},\mathcal{P})$), such that  $\E(Z_1,z)=0$. Moreover, $Z_i$ has an arbitrary covariance operator $\mathbf{Cov}_Z:\mathcal{Y}\to\mathcal{Y}$, which is the bounded linear operator defined implicitly via the equation  $(\mathbf{Cov}_Z z,z')=\E\left[(Z_1,z)(Z_1,z')\right]$ for all $z,z'\in\mathcal{Y}$ (the case where $\mathbf{Cov}_Z=Id$ is denoted as white noise).
 Instead of $Ax=z$ we solve the symmetrised equation $K\hat{x}=\hat{y}$, with $K=A^*A$ and $y=A^*\hat{z}$. The symmetrised i.i.d. measurements $Y_1=A^*(\hat{z}+Z_1), Y_2 = A^*(\hat{z}+Z_2),...$ then fulfill $\E[Y_1] = A^*\hat{z} = \hat{y}$ and 
 
 \begin{align*}
 \E(Y_1-\hat{y},u_j(K))^2 &= \E(A^*(\hat{z}+Z_1) - A^*\hat{z},v_j(A))^2 = \sigma_j(A)^2\E(Z_1,u_j(A))^2\\
  &\asymp j^{-q} (\mathbf{Cov}_Z u_j(A),u_j(A))^2 \le j^{-q} \|\mathbf{Cov}_Z\|^2 = j^{-q}.
 \end{align*}
 
Since we assume to know the singular value decomposition, this allows to apply Theorem \ref{opt1a}. It should be noted, that for coloured noise, no prewhitening step or additional assumptions for the covariance operator are needed (in contrast to e.g. \cite{blanchard2012discrepancy}).

\end{example}

 All in all, the main contribution of this work is to answer the question of optimal adaptivity (in the minimax-sense) for the discrepancy principle in statistical inverse problems with multiple measurements, which was left open in the original work \cite{harrach2020beyond}. Moreover, in the light of Example \ref{int:ex1} the results may be compared to classical existing results for statistical inverse problems, usually using a white noise error model. In particular, they generalise results from \cite{blanchard2012discrepancy}, \cite{jin2013oracle} and \cite{lu2014discrepancy}, where modifications of the discrepancy principle are applied to the symmetrised equation in several ways. Firstly, the error distribution is arbitrary, secondly the noise level and the covariance structure need not to be known and thirdly, a self-similarity condition (Assumption 3 in \cite{lu2014discrepancy} and Assumption 2.4 in \cite{jin2013oracle}) for $\hat{x}$ is not needed. However it should be mentioned here that using symmetrisation is usually avoided, since the ill-posedness of the symmetrised equation $A^*Ax=A^*y$ is much worse than the one of the original equation $Ax=y$. Note that this does not contradict the (almost) order-optimality of the methods relying on the discrepancy principle mentioned above, but still may cause problems in practice. Because of this, under white noise one often relies on other methods, which do not depend on the residual, see e.g. \cite{bissantz2007convergence} for a priori bounds, \cite{mathe2003discretization} for the Lepski principle, or \cite{li2020empirical} for unbiased risk estimation, to only name a few. Finally in \cite{blanchard2018early}, \cite{harrach2020regularising} and \cite{jahn2021optimal} recent modifications of the discrepancy principle, which are based on discretisation and not on symmetrisation, are investigated in white noise scenarios.

\section{Proofs}\label{sec:3}

In this section we present the proofs of the above statements.

\subsection{Proof of Theorem \ref{opt1}}

 Note that $(\hat{y},u_j) = \sigma_j (\hat{x},v_j) = \sigma_j^{1+\nu} (\xi,v_j)$. The bias-variance decomposition gives

\begin{align*}
\E\|\bar{X}_k^n-\hat{x}\|^2 &=\E\left[\sum_{j=1}^k\left(\frac{(\bar{Y}_n,u_j)}{\sigma_j} - (\hat{x},v_j)\right)^2\right] + \sum_{j=k+1}^\infty(\hat{x},v_j)^2\\
&= \sum_{j=1}^k \sigma_j^{-2}\E(\bar{Y}_n-\hat{y},u_j)^2 + \sum_{j=k+1}^\infty (\hat{x},v_j)^2\\
 & = \frac{1}{n} \sum_{j=1}^k \sigma_j^{-2} \E(Y_1-\hat{y},u_j)^2 + \sum_{j=k+1}^\infty \sigma_j^{2\nu}(\xi,v_j)^2\\
 & \asymp \frac{1}{n} \sum_{j=1}^k j^{q-p} + \sum_{j=k+1}^\infty j^{-\nu q}(\xi,v_j)^2.
\end{align*}

 Therefore it holds that

\begin{align*}
\sup_{\hat{x}\in\mathcal{X}_{\nu,\rho}}\E\|\bar{X}_k^n - \hat{x}\|^2 &\asymp \frac{1}{n} \int_{j=1}^k x^{q-p} dx + \rho k^{-\nu q}\\
&\asymp \begin{cases} \frac{1}{n} + \rho k^{-\nu q} & q-p<-1\\
\frac{1}{n}\log(k) + \rho k^{-\nu q} & q-p=-1\\
\frac{1}{n}k^{p-q+1} +\rho k^{-\nu q} & q-p>-1
\end{cases}.
\end{align*}

The right hand side is minimised by the choices

$$k=k_n \asymp \begin{cases} (\rho n)^{\frac{1}{\nu q}} & q-p\le-1\\
                              (\rho n)^\frac{1}{(1+\nu)q + 1 -p} & q-p>-1
                              \end{cases}.$$

Thus we obtain 

$$\inf_{k\ge 1}\sup_{\hat{x}\in\mathcal{X}_{\nu,\rho}}\E\| \bar{X}_{k}^{n} - \hat{x}\|^2\asymp\sup_{\hat{x}\in\mathcal{X}_{\nu,\rho}}\E\| \bar{X}_{k_n}^{n} - \hat{x}\|^2 \asymp \begin{cases} \frac{1}{n} & q-p<-1\\
                     \frac{\log(\rho n)}{n} & q-p=-1\\
                     \rho^\frac{q+1-p}{(\nu+1)q+1-p}\left(\frac{1}{n}\right)^\frac{\nu}{\nu+1 - \frac{p-1}{q}} & q-p>-1
                     \end{cases}.$$

\subsection{Proof of Theorem \ref{opt2}}

 Let $m_n:=\lfloor n^{1-\varepsilon_1}\rfloor$ and $\hat{x}=(K^*K)^\frac{\nu}{2} \xi$ with $\|\xi\|\le \rho$. For $j\in\N$ fixed  it holds that $s_{j,n}^2\to \E(Y_1-\hat{y},u_j)^2$ (in probability, almost surely and in $L^2$). We denote the deterministic limit (for $n\to\infty$) of the random weights $d_{j,n}$ by 

\begin{align*}
d_1:&=\sqrt{\min\left(\frac{\E\|Y_1-\hat{y}\|^2}{\E\left(Y_{1}-\hat{y},u_1\right)^2},\frac{1}{\sigma_1^2}\right)},\\
d_{j}:&=\sqrt{\min\left(\frac{j^{-(1+\varepsilon_2)}}{\E\left(Y_{1}-\hat{y},u_j\right)^2}\E\|Y_1-\hat{y}\|^2,\frac{\sigma_{j-1}^2}{\sigma_j^2} d_{j-1}^2\right)},\quad j>1.
\end{align*}

The weights $d_{j,n}$ and $d_j$ can be interpreted as belonging to weighting operators $S_n$ and $S$ respectively. Moreover, the assumption on the error distribution of the $(Y_1-\hat{y},u_j)$ imply that $S$ can be seen as a deterministic limit (for $n\to\infty$) of the $S_n$ in a suitable sense. This will ultimately allow to rephrase the increased smoothness relative to the deterministic rescaled limit operator $SK$ instead of the random rescaled operator $S_nK$. In the following, $S_n$ and $S$ are not used explicitly, it suffices to stick to the weights $d_{j,n}$ and $d_j$. We start with the following auxiliary proposition, which summarises some of the properties of the sequence $(d_j)_{j\in\N}$.
\begin{proposition}\label{sec3:prop1}
There holds
\begin{align}\label{opt2:eq0}
d_j&\le \frac{1}{\sigma_j},\\\label{opt2:eq0b}
\lim_{j\to\infty} d_j&=\infty,\\\label{opt2:eq0c}
\inf_{j\in\N} d_j&=:d>0.
\end{align}
\end{proposition}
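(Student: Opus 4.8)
The plan is to analyze the defining recursion for $d_j$ directly, using the hypotheses $\sigma_j^2 \asymp j^{-q}$, $\E(Y_1-\hat{y},u_j)^2 \le C_p j^{-p}$, and $q > p-1 > 0$. Write $v_j := \E(Y_1-\hat{y},u_j)^2$ and $M := \E\|Y_1-\hat{y}\|^2 = \sum_j v_j < \infty$ (finite since $p > 1$). Then $d_1^2 = \min(M/v_1, \sigma_1^{-2})$ and for $j>1$, $d_j^2 = \min\bigl(M j^{-(1+\varepsilon_2)}/v_j,\ (\sigma_{j-1}^2/\sigma_j^2) d_{j-1}^2\bigr)$.

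For \eqref{opt2:eq0} ($d_j \le 1/\sigma_j$): proceed by induction. The base case holds since $d_1^2 \le \sigma_1^{-2}$ by definition. For the inductive step, the second argument of the $\min$ gives $d_j^2 \le (\sigma_{j-1}^2/\sigma_j^2) d_{j-1}^2 \le (\sigma_{j-1}^2/\sigma_j^2)\sigma_{j-1}^{-2} = \sigma_j^{-2}$, using the inductive hypothesis. So $d_j^2 \le \sigma_j^{-2}$ for all $j$, which is \eqref{opt2:eq0}.

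For \eqref{opt2:eq0b} ($d_j \to \infty$): the idea is that both candidate terms in the $\min$ grow. The first argument is $M j^{-(1+\varepsilon_2)}/v_j \ge M j^{-(1+\varepsilon_2)}/(C_p j^{-p}) = (M/C_p) j^{p-1-\varepsilon_2}$, which tends to $\infty$ since $\varepsilon_2 < p-1$. For the second argument, note $\sigma_{j-1}^2/\sigma_j^2 \asymp (j/(j-1))^q \to 1$; more usefully, telescoping along any stretch where the second argument is the active one multiplies $d^2$ by a factor $\sigma_{j_0}^2/\sigma_j^2 \asymp (j/j_0)^q$. Combining: since $q > p-1 > \varepsilon_2$, whenever the recursion "resets" to the first argument at some index $j_0$ we have $d_{j_0}^2 \gtrsim j_0^{p-1-\varepsilon_2}$, and between resets $d_j^2$ only grows (the ratio $\sigma_{j-1}^2/\sigma_j^2 > 1$ since $\sigma_j$ is decreasing). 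So $d_j^2 \ge \min$ over the most recent reset index $j_0 \le j$ of $c\, j_0^{p-1-\varepsilon_2}$; if resets happen infinitely often this tends to $\infty$ along resets and is nondecreasing in between, hence $d_j \to \infty$; if resets stop after some $J$, then for $j \ge J$, $d_j^2 = (\sigma_J^2/\sigma_j^2) d_J^2 \asymp (j/J)^q d_J^2 \to \infty$. Either way \eqref{opt2:eq0b} holds. I would also need the complementary check that the first argument does not eventually force $d_j^2$ down below a growing bound — but since $M j^{-(1+\varepsilon_2)}/v_j \gtrsim j^{p-1-\varepsilon_2} \to \infty$, the first argument itself is eventually large, so neither branch can keep $d_j^2$ bounded.

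For \eqref{opt2:eq0c} ($\inf_j d_j > 0$): since $d_j \to \infty$ by \eqref{opt2:eq0b}, only finitely many $d_j$ can be small; it suffices that each individual $d_j > 0$, which is clear because $v_j > 0$ (from $\E(Y_1-\hat y,u_j)^2 \asymp j^{-p}$... wait — under Theorem~\ref{opt2} we only have $\le C_p j^{-p}$; still $v_j > 0$ is needed for the ratios to make sense, and if some $v_j = 0$ the corresponding argument is $+\infty$ and $d_j$ is governed by the other branch, which is positive). Hence $\inf_j d_j = \min(d_1,\dots,d_{J}, \inf_{j>J} d_j)$ with the tail bounded below by, say, $1$, and the finite part a positive minimum.

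The main obstacle is \eqref{opt2:eq0b}: one must carefully track the interplay between the two branches of the $\min$ across resets, ruling out the scenario where the second (operator-boundedness) branch repeatedly clamps $d_j^2$ just as the first branch would let it grow. The key quantitative fact making this work is the strict inequality $q > p - 1 > \varepsilon_2$: the operator branch lets $d_j^2$ grow like $j^q$ between resets, which strictly outpaces the $j^{1+\varepsilon_2}$ decay budget, so resets can only push $d_{j_0}^2$ up to order $j_0^{p-1-\varepsilon_2}$ and never below a slowly growing floor. Once this bookkeeping is in place, \eqref{opt2:eq0} and \eqref{opt2:eq0c} are short.
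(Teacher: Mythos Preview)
Your proposal is correct and follows the same approach as the paper: induction for \eqref{opt2:eq0}, a case split on whether the first branch of the $\min$ is active infinitely often for \eqref{opt2:eq0b}, and \eqref{opt2:eq0c} as an immediate consequence of \eqref{opt2:eq0b} together with $d_j>0$. If anything, your handling of the ``infinitely many resets'' case is slightly more explicit than the paper's (you spell out that between resets $d_j^2$ is nondecreasing since $\sigma_{j-1}\ge\sigma_j$, which is what upgrades convergence along the reset subsequence to convergence of the whole sequence); one minor remark is that your closing paragraph invokes $q>p-1$, but the proposition itself only needs $\varepsilon_2<p-1$.
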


\begin{proof}[Proof of Proposition \ref{sec3:prop1}]

Note that obviously $d_j>0$ for all $j\in\N$. First, \eqref{opt2:eq0} is fulfilled for $j=1$. For $j\ge 2$, we have

\begin{equation}
d_j\le \frac{\sigma_{j-1}}{\sigma_j}d_{j-1} \le \frac{\sigma_{j-1}}{\sigma_j}\frac{\sigma_{j-2}}{\sigma_{j-1}}d_{j-2} \le \frac{\sigma_{j-1}}{\sigma_j}\frac{\sigma_{j-2}}{\sigma_{j-1}} ... \frac{\sigma_1}{\sigma_2}d_1\le  \frac{\sigma_{j-1}}{\sigma_j}\frac{\sigma_{j-2}}{\sigma_{j-1}} ... \frac{\sigma_1}{\sigma_2}\frac{1}{\sigma_1} = \frac{1}{\sigma_j}.
\end{equation}

For \eqref{opt2:eq0b} set $J:=\sup\left\{j\in\N~:~d_j=\sqrt{\frac{j^{-(1+\varepsilon_2)}}{\E(Y_1-\hat{y},u_j)^2}\E\|Y_1-\hat{y}\|^2}\right\}$. If $J=\infty$, the statement is proven since from $p>1+\varepsilon_2$ it follows that

$$\frac{j^{-(1+\varepsilon_2)}}{\E(Y_1-\hat{y},u_j)^2} \ge j^{p-(1+\varepsilon_2)} C_p^{-1} \to \infty$$

as $j\to\infty$. Otherwise, if $J<\infty$, there holds $d_j=\frac{\sigma_{j-1}}{\sigma_j}d_{j-1}$ for $j\ge J$ and thus

$$d_j = \frac{\sigma_{j-1}}{\sigma_j}d_{j-1} = \frac{\sigma_{j-1}}{\sigma_j}  \frac{\sigma_{j-2}}{\sigma_{j-1}} ... \frac{\sigma_{J}}{\sigma_{J+1}} d_J = \frac{\sigma_J}{\sigma_j} d_J \to \infty$$

as $j\to\infty$, since $\sigma_j\to 0$. Finally, \eqref{opt2:eq0c} follows directly from \eqref{opt2:eq0b}.

\end{proof}

  Now we first show, that the true solution has at least smoothness $\nu':=\frac{q}{q+1+\varepsilon_2-p} \nu$ (relative to the rescaled limit operator $SK$). Since $\varepsilon_2<p-1$, there holds $\nu'>\nu$.
We use (the reverse of) \eqref{opt2:eq0} together with \eqref{opt2:eq0c} and obtain

\begin{align}\label{opt2:eq0a}
\frac{\sigma_j^{\nu-\nu'}}{d_j^{\nu'}} &\le \frac{d_j^{\nu'-\nu}}{d_j^{\nu'}} = d_j^{-\nu} \le d^{-\nu}
\end{align}

for all $j\in\N$. We express $\hat{x}$ with respect to the rescaled limit operator $SK$ and obtain

\begin{align}\label{opt2:eq1}
\hat{x} & =\sum_{j=1}^\infty \sigma_j^\nu (\xi,v_j)v_j = \sum_{j=1}^\infty \left(d_j \sigma_j\right)^{\nu'} \frac{\sigma_j^{\nu-\nu'}}{d_j^{\nu'}}(\xi,v_j) v_ j  = \sum_{j=1}^\infty (d_j\sigma_j)^{\nu'}(\xi',v_j)v_j
\end{align}

with $\xi':=\sum_{j=1}^\infty \frac{\sigma_j^{\nu-\nu'}}{d_j^{\nu'}}(\xi,v_j)v_j$. By \eqref{opt2:eq0a}, there holds $\|\xi'\| \le d^{-\nu}\|\xi\| \le d^{-\nu}\rho =:\rho'$.  

The assumption $\sup_{j\in\N}\frac{\E\left[\left(Y_1-\hat{y},u_j\right)^4\right]}{\left(\E\left[\left(Y_1-\hat{y},u_j\right)^2\right]\right)^2}$ guarantees, that we can estimate the variances $\E(Y_1-\hat{y},u_j)^2$ uniformly for $j=1,...,m_n$. To see this, we use Theorem 2 of \cite{angelova2012moments} which states that $\E[|s_{j,n}^2-\E(Y_1-\hat{y},u_j)^2|^2] \le \frac{4}{n} \E(Y_1-\hat{y},u_j)^4$. Let $\eta>0$. Then

\begin{align*}
&\mathbb{P}\left(|s_{j,n}^2-\E(Y_1-\hat{y},u_j)^2| \le \eta\E(Y_1-\hat{y},u_j)^2,~ \forall j\le m_n\right)\\
\ge &1 - \sum_{j=1}^{m_n}\mathbb{P}\left(|s_{j,n}^2-\E(Y_1-\hat{y},u_j)^2| > \eta\E(Y_1-\hat{y},u_j)^2\right)\\
\ge &1 - \sum_{j=1}^{m_n}\frac{\E[|s_{j,n}^2-\E(Y_1-\hat{y},u_j)^2|^2]}{(\eta\E[(Y_1-\hat{y},u_j)^2])^2}\\
 = &1- \frac{4m_n}{\eta^2n}\sup_{j=1,...,m_n}\frac{\E[(Y_1-\hat{y},u_j)^4]}{(\E[(Y_1-\hat{y},u_j)^2])^2} \ge 1 -\frac{4m_nC}{\eta^2n}\\
  \ge &1- \frac{4}{\eta^2}C n^{-\varepsilon_1} \to 1
\end{align*}

as $n\to\infty$, where we used Chebyshev's inequality in the second step. From that directly follows

\begin{align}\label{thrdp:eq2}
\mathbb{P}&\left( \frac{d_j}{2}\le d_{j,n} \le 2 d_{j},~\forall j=1,...,m_n\right)\to 1,\\\label{thrdp:eq1}
\mathbb{P}&\left( |\sqrt{n}\delta_n^{est} - \gamma|\le \frac{\gamma}{2}\right)\to 1
\end{align}

for 

\begin{equation}\label{thrdp:eq2a}
{\delta_n^{est}}' := \sqrt{\frac{\sum_{j=1}^{m_n} d_{j,n}^2 s_{j,n}^2}{n}}
\end{equation}

from Algorithm 1 and $\gamma:=\sqrt{\sum_{j=1}^{\infty} d_j^2\E(Y_1-\hat{y},u_j)^2}$ as $n\to\infty$. 

We will distinguish two cases in the following. In the analysis we will often restrict to certain good events $\Omega_n$ which hold with a probability $\mathbb{P}\left(\Omega_n\right)\to 1$ as $n\to\infty$. Moreover, we will repeatedly use Markov/Chebyshev's inequality and that for i.i.d real-valued random variables $Z_1,...,Z_n$ with $\E[Z_i]=0$ and $\E[Z_i^2]<\infty$ there holds $\E[\left(\sum_{i=1}^nZ_i\right)^2] = n\E[Z_1^2]$. Thus, e.g. 

$$\E\left[(\bar{Y}_n-\hat{y},u_j)^2\right] = \frac{1}{n}\E[(Y_1-\hat{y},u_j)^2]$$

for all $j,n\in\N$.
\subsubsection{Case 1}

We first assume, that for all $k\in\N$ there exists $j_k\ge k$ such that $(\hat{y},u_{j_k})\neq 0$. Note that then also $(\hat{x},v_{j_k}), (\xi,v_{j_k})\neq 0$.

\begin{lemma}\label{chapt1:lem2}
Assume that for all $k\in\N$ there exists $j_k\ge k$ such that $(\hat{y},u_{j_k})\neq 0$. Then for ${\delta_n^{est}}'$ from \eqref{thrdp:eq2a} there holds 

$$\mathbb{P}\left( \sqrt{\sum_{j=k_n}^{m_n} d_{j,n}^2 (\bar{Y}_n-\hat{y},u_j)^2} \le \frac{{\delta_n^{est}}'}{2}\right)\to 1$$

as $n\to\infty$.

\end{lemma}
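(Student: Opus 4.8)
The plan is to show that the tail $\sqrt{\sum_{j=k_n}^{m_n} d_{j,n}^2 (\bar Y_n - \hat y, u_j)^2}$, which is a "noise-only" quantity, is with high probability much smaller than ${\delta_n^{est}}'$, which by \eqref{thrdp:eq1} is essentially $\gamma/\sqrt n$. The point is that the stopping index $k_n$ of the rescaled discrepancy principle cannot be too small: since $(\hat y, u_{j_k})\neq 0$ for arbitrarily large indices, the rescaled signal does not vanish, and the discrepancy principle will not stop until $k_n$ has grown. First I would use \eqref{thrdp:eq2} to pass from the random weights $d_{j,n}$ to the deterministic limit weights $d_j$ (up to a factor $2$) on a good event $\Omega_n$ with $\mathbb P(\Omega_n)\to 1$, so that it suffices to control $\sum_{j=k_n}^{m_n} d_j^2 (\bar Y_n - \hat y, u_j)^2$ against $\gamma^2/n$. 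Then I would split this tail sum at a fixed but large truncation level $N$: the block $\sum_{j=k_n}^{N}$ (present only if $k_n\le N$) and the block $\sum_{j=N+1}^{m_n}$.

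For the upper block $\sum_{j=N+1}^{m_n} d_j^2 (\bar Y_n - \hat y, u_j)^2$, I would take expectations: using $\E[(\bar Y_n - \hat y,u_j)^2] = \tfrac1n \E[(Y_1-\hat y,u_j)^2]$ and the definition of $d_j$, which enforces $d_j^2 \E[(Y_1-\hat y,u_j)^2] \le j^{-(1+\varepsilon_2)}\,\E\|Y_1-\hat y\|^2$ for $j\ge 2$, the expected value of this block is at most $\tfrac{C}{n}\sum_{j>N} j^{-(1+\varepsilon_2)}$, which can be made an arbitrarily small fraction of $\gamma^2/n$ by choosing $N$ large. A Markov inequality then makes this block $\le \varepsilon \gamma^2/n$ with probability $\to 1$. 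The lower block needs the non-vanishing hypothesis: on the event that $k_n\le N$, the discrepancy-principle stopping rule (the \textbf{while}-loop in Algorithm 1) would have been triggered at some level $\le N$, i.e. $\sqrt{\sum_{j=k_n+1}^{m_n} d_{j,n}^2(\bar Y_n,u_j)^2}\le {\delta_n^{est}}'$; but for $n$ large the left side is, on $\Omega_n$ and with high probability, bounded below by a positive constant (coming from $|(\hat y,u_{j_k})|>0$ for some fixed $j_k\le N$, plus concentration of $(\bar Y_n,u_{j_k})^2$ around $(\hat y,u_{j_k})^2$), while ${\delta_n^{est}}'\asymp \gamma/\sqrt n \to 0$. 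Hence $\mathbb P(k_n\le N)\to 0$ for every fixed $N$, so with probability $\to 1$ there is no lower block at all and the whole tail reduces to the upper block.

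Combining: with probability tending to $1$ we have $k_n > N$ and $\sum_{j=k_n}^{m_n} d_j^2(\bar Y_n-\hat y,u_j)^2 \le \sum_{j>N} d_j^2(\bar Y_n-\hat y,u_j)^2 \le \varepsilon\gamma^2/n \le \tfrac14 {\delta_n^{est}}'^2/(1+\tfrac12)^{-2}\cdots$ — more cleanly, choosing $N$ and hence $\varepsilon$ small enough and using $\sqrt n\,{\delta_n^{est}}' \ge \gamma/2$ from \eqref{thrdp:eq1}, the tail is $\le {\delta_n^{est}}'/2$, which is the claim. The main obstacle I anticipate is the lower-block argument: one has to be careful that the while-loop characterization of $k_n$ really does give a fixed-level residual bound when $k_n\le N$, and that the non-vanishing of a single component $(\hat y,u_{j_k})$ survives the rescaling (it does, since $d_{j_k}>0$) and beats the vanishing estimated noise level uniformly — this is where the hypothesis that $\hat y$ has non-zero components at arbitrarily high indices is essential, and it must be invoked for the specific $N$ chosen after fixing $\varepsilon$.
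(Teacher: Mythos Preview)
Your proposal is correct and follows essentially the same strategy as the paper: first use the non-vanishing hypothesis to show that $k_n\to\infty$ in probability, then use Markov's inequality together with the summability $\sum_j d_j^2\,\E(Y_1-\hat y,u_j)^2\le C\sum_j j^{-(1+\varepsilon_2)}<\infty$ to control the tail. The paper packages the first step slightly differently---rather than fixing $N$ and arguing $\mathbb P(k_n\le N)\to 0$, it extracts a sequence $q_n\to\infty$ with $\mathbb P(k_n\ge q_n)\to 1$ and then bounds $\sum_{j\ge q_n}$ directly, which avoids the final $\varepsilon$-of-$\varepsilon$ combination---but the content is the same.

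One small slip: in your lower-block argument you write ``$j_k\le N$'', but you need $j_k> N$ (apply the hypothesis at level $k=N+1$) so that the non-vanishing component actually sits inside the residual $\sum_{j\ge k_n+1}$ whenever $k_n\le N$. With that correction the contradiction with ${\delta_n^{est}}'\to 0$ goes through exactly as you describe.
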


\begin{proof}[Proof of Lemma \ref{chapt1:lem2}]

We first show that there exists $(q_n)_{n\in\N}$ such that

\begin{equation}\label{thrdp:eq3}
\mathbb{P}\left(k_n\ge q_n\right)\to1\quad\mbox{and}\quad q_n\to \infty
\end{equation}

as $n\to\infty$. For that it suffices to show that $\lim_{n\to\infty}\mathbb{P}\left( k_n\ge k\right) =1$ for all $k\in\N$. By assumption there exists $j_k\ge k$ such that $(\hat{y},u_{j_k})\neq 0$. We set

\begin{equation}\label{thrdp:eq5}
\Omega_n:=\left\{ |\sqrt{n}{\delta_n^{est}}' - \gamma| \le \frac{\gamma}{2},~(\bar{Y}_n,u_{j_k})^2\ge (\hat{y},u_{j_k})^2/2,~\frac{d_{j_k,n}}{2}\le d_{j_k} \le 2d_{j_k,n}\right\}.
\end{equation}

Then for $n\ge \max(j_k,32\gamma^2/(d_{j_k}(\hat{y},u_{j_k}))^2)$,

\begin{align*}
{\delta_n^{est}}'\chi_{\Omega_n} &\le \frac{2\gamma}{\sqrt{n}}\chi_{\Omega_n} < \sqrt{\frac{d_{j_k}^2(\hat{y},u_{j_k})^2}{8}}\chi_{\Omega_n} \le \sqrt{\frac{d_{j_k}^2(\bar{Y}_n,u_{j_k})^2}{2}}\chi_{\Omega_n}\\
&\le \sqrt{d_{j_k,n}^2(\bar{Y}_n,u_{j_k})^2}\le \sqrt{\sum_{j=k+1}^{m_n}d_{j,n}^2(\bar{Y}_n,u_j)^2}.
\end{align*}

Thus $k_n\chi_{\Omega_n} \ge k\chi_{\Omega_n}$  by Algorithm 1 and \eqref{thrdp:eq3} follows with $\lim_{n\to\infty}\mathbb{P}\left(\Omega_n\right)=1$, which holds because of \eqref{thrdp:eq2}, \eqref{thrdp:eq1} and the law of large numbers. We come to the main proof. Let $\varepsilon>0$ and $q_n$ be such that $q_n\to\infty$ and

\begin{equation}\label{thrdp:eq5a}
\mathbb{P}\left(k_n\ge q_n\right)\to1
\end{equation}
  as $n\to\infty$. Then,

\begin{align}\label{thrdp:eq5b}
&\mathbb{P}\left( \sqrt{\sum_{j=k_n}^{m_n}d_{j,n}^2(\bar{Y}_n-\hat{y},u_j)^2} \le \frac{{\delta_n^{est}}'}{2}\right)\\\notag
\ge &\mathbb{P}\left( \sqrt{\sum_{k=q_n}^{m_n}d_{j}^2(\bar{Y}_n-\hat{y},u_j)^2} \le \frac{\gamma}{8\sqrt{n}},~{\delta_n^{est}}'\ge \frac{\gamma}{2\sqrt{n}},~ 2d_j\ge d_{j,n}~\forall j\le m_n,~k_n\ge q_n\right)\\\notag
\ge &1-\mathbb{P}\left( \sqrt{\sum_{k=q_n}^{m_n}d_{j}^2(\bar{Y}_n-\hat{y},u_j)^2} >\frac{\gamma}{8\sqrt{n}}\right) - \mathbb{P}\left({\delta_n^{est}}'< \frac{\gamma}{2\sqrt{n}}\right)\\\notag
&\qquad-\mathbb{P}\left(\exists j\le m_n \mbox{~such that~} d_{j,n}>2d_j\right) - \mathbb{P}\left(k_n< q_n\right).
\end{align}

 Now

\begin{align}\label{thrdp:eq5c}
\mathbb{P}\left( \sqrt{\sum_{j=q_n}^{m_n} d_j^2(\bar{Y}_n-\hat{y},u_j)^2} > \frac{\gamma}{8\sqrt{n}}\right) &\le \frac{64n}{\gamma^2} \sum_{j=q_n}^{m_n} d_j^2\E(\bar{Y}_n-\hat{y},u_j)^2 =  \frac{64 }{\gamma^2} \sum_{j=q_n}^{m_n} d_j^2\E(Y_1-\hat{y},u_j)^2\\\notag
&= \frac{64}{\gamma^2} \sum_{j=q_n}^{m_n} \min\left(\frac{j^{-(1+\varepsilon_2)}}{\E(Y_1-\hat{y},u_j)^2},\sigma_j^{-2}\right)\E(Y_1-\hat{y},u_j)^2\\\notag
 &= \frac{64}{\gamma^2}\sum_{j=q_n}^{m_n} j^{-(1+\varepsilon_2)}\le \frac{64}{\gamma^2} \sum_{j=q_n}^{m_n} j^{-(1+\varepsilon_2)} \to 0
\end{align}

as $n\to\infty$, where we used $\sum_{j=1}^\infty j^{-(1+\varepsilon_2)} <\infty$ and $q_n\to \infty$ in the sixth step. Plugging \eqref{thrdp:eq2}, \eqref{thrdp:eq1}, \eqref{thrdp:eq5a} and \eqref{thrdp:eq5c} into \eqref{thrdp:eq5b} then yields

$$\mathbb{P}\left(\sqrt{\sum_{k=q_n}^{m_n} d_{j,n}^2 (\bar{Y}_n-\hat{y},u_j)^2} \le \frac{{\delta_n^{est}}'}{2}\right)\to 1$$

as $n\to\infty$ and the proof of Lemma \ref{chapt1:lem2} is concluded.

\end{proof}

We start the main proof and decompose as usual into a data propagation error, approximation error and discretisation error

$$\|\bar{X}^n_{k_n}-\hat{x}\| \le \sqrt{\sum_{j=1}^{k_n} \frac{(\bar{Y}_n-\hat{y},u_j)^2}{\sigma_j^2}} + \sqrt{\sum_{j=k_n+1}^{m_n}(\hat{x},v_j)^2} + \sqrt{\sum_{j=m_n+1}^\infty (\hat{x},v_j)^2}.$$

We first consider the second term (approximation error). With the convention $\sum_{j=s}^t = 0$ for $s>t$, a standard application of H\"older's inequality for $p=\frac{\nu'+1}{\nu'}$ and $q=\nu'+1$, \eqref{opt2:eq1} and the triangle inequality yield

\begin{align*}
&\sqrt{\sum_{j=k_n+1}^{m_n} (\hat{x},u_j)^2} =\sqrt{\sum_{j=k_n+1}^{m_n} (d_j \sigma_j)^{2\nu'}(\xi',v_j)^2}\\
 \le &\sqrt{ \left(\sum_{j=k_n+1}^{m_n}(d_j \sigma_j)^{2(\nu'+1)} (\xi',v_j)^2\right)^\frac{\nu'}{\nu'+1}\left(\sum_{j=k_n+1}^{m_n} (\xi',v_j)^2\right)^\frac{1}{\nu'+1}}\\
 \le &{\rho'}^\frac{1}{\nu'+1}\left(\sqrt{ \sum_{j=k_n+1}^{m_n} (d_j\sigma_j)^2(\hat{x},v_j)^2}\right)^\frac{\nu'}{\nu'+1}={\rho'}^\frac{1}{\nu'+1}\left(\sqrt{ \sum_{j=k_n+1}^{m_n} d_j^2 (\hat{y},v_j)^2}\right)^\frac{\nu'}{\nu'+1}\\
 \le &{\rho'}^\frac{1}{\nu'+1}\left( \sqrt{\sum_{j=k_n+1}^{m_n} d_j^2(\bar{Y}_n,u_j)^2} + \sqrt{\sum_{j=k_n+1}^{m_n} d_j^2(\bar{Y}_n-\hat{y},u_j)^2}\right)^\frac{\nu'}{\nu'+1}.
\end{align*}

Thus for 

\begin{align}\label{thrdp:eq6}
\Omega_n:&=\left\{\sqrt{\sum_{j=k_n}^{m_n} d_{j,n}^2(\bar{Y}_n-\hat{y},u_j)^2} \le \frac{{\delta_n^{est}}'}{2},~|{\delta_n^{est}}' - \frac{\gamma}{\sqrt{n}}|\le \frac{\gamma}{2\sqrt{n}},\right.\\\notag
&\qquad \left.~\frac{d_{j}}{2}\le d_{j,n}\le 2 d_j~\forall j\le m_n\right\}
\end{align}

there holds

\begin{align*}
&\left( \sqrt{\sum_{j=k_n+1}^{m_n} d_j^2(\bar{Y}_n,u_j)^2} + \sqrt{\sum_{j=k_n+1}^{m_n} d_j^2(\bar{Y}_n-\hat{y},u_j)^2}\right)^\frac{\nu'}{\nu'+1}\chi_{\Omega_n}\\
\le &2^\frac{\nu'}{\nu'+1}\left( \sqrt{\sum_{j=k_n+1}^{m_n} d_{j,n}^2(\bar{Y}_n,u_j)^2} + \sqrt{\sum_{j=k_n+1}^{m_n} d_{j,n}^2(\bar{Y}_n-\hat{y},u_j)^2}\right)^\frac{\nu'}{\nu'+1}\chi_{\Omega_n}\\
\le &2^\frac{\nu'}{\nu'+1} \left( {\delta_n^{est}}'+\frac{{\delta_n^{est}}'}{2}\right)^\frac{\nu'}{\nu'+1}\chi_{\Omega_n} \le \left(\frac{4 \gamma}{\sqrt{n}}\right)^\frac{\nu'}{\nu'+1}
\end{align*}

by the Definition of $\Omega_n$ \eqref{thrdp:eq6} and $k_n$.
Consequently, for the approximation error and the discretisation error there holds

\begin{align}\label{thrdp:eq7}
&\sqrt{\sum_{j=k_n+1}^{m_n} (\hat{x},u_j)^2}\chi_{\Omega_n} + \sqrt{\sum_{j=m_n+1}^\infty (\hat{x},u_j)^2}\\\notag
 \le &{\rho'}^\frac{1}{\nu'+1}\left(\frac{4\gamma}{\sqrt{n}}\right)^\frac{\nu'}{\nu'+1} + \sqrt{\sum_{j=m_n+1}^\infty \sigma_j^{2\nu}(\xi,v_j)^2}\le \left(d^{-\nu}\rho\right)^\frac{1}{\nu'+1}\left(\frac{4\gamma}{\sqrt{n}}\right)^\frac{\nu'}{\nu'+1} + \sigma_{m_n}^{\nu} \rho\\\notag
 \le &\frac{L}{2} \max\left({\rho}^\frac{q+1+\varepsilon_2-p}{(\nu+1)q+1+\varepsilon_2-p} \left(\frac{1}{\sqrt{n}}\right)^\frac{\nu}{\nu+\frac{q+1+\varepsilon_2-p}{q}}, \rho \left(\frac{1}{\sqrt{n}}\right)^{(1-\varepsilon_1)q\nu}\right)
\end{align}

for $L=4\max\left((4\gamma)^\frac{\nu'}{\nu'+1} d^{-\frac{\nu}{\nu+1}}, d^{-\frac{\nu}{\nu+1}},1\right)$ and we obtain

\begin{align}\label{case1approx}
&\mathbb{P}\left(\sqrt{\sum_{j=k_n+1}^{m_n}(\hat{x},v_j)^2} + \sqrt{\sum_{j=m_n+1}^\infty (\hat{x},v_j)^2} \le \frac{L}{2}\max\left(\rho^\frac{q+1+\varepsilon_2-p}{(\nu+1)q+1+\varepsilon_2-p}\left(\frac{1}{\sqrt{n}}\right)^\frac{\nu}{\nu+\frac{q+1+\varepsilon_2-p}{q}},\rho\left(\frac{1}{\sqrt{n}}\right)^{(1-\varepsilon_1)q\nu}\right)\right)\\\notag
\ge &\mathbb{P}\left(\Omega_n\right)\to 1
\end{align}

as $n\to\infty$, where we used \eqref{thrdp:eq2}, \eqref{thrdp:eq1} and Lemma \ref{chapt1:lem2} for $\Omega_n$ given in \eqref{thrdp:eq6}.

 To finish the proof we need to verify a similar bound for the data propagation error. By definition of the discrepancy principle (Algorithm 1) and $\Omega_{n}$ in \eqref{thrdp:eq6} there holds

\begin{align*}
{\delta_n^{est}}'\chi_{\Omega_n} &< \sqrt{\sum_{j=k_n}^{m_n} d_{j,n}^2(\bar{Y}_n,u_j)^2}\chi_{\Omega_n}\\
 &\le \sqrt{\sum_{j=k_n}^{m_n}d_{j,n}^2(\hat{y},u_j)^2}\chi_{\Omega_n}  + \sqrt{\sum_{j=k_n}^{m_n}d_{j,n}^2(\bar{Y}_n-\hat{y},u_j)^2}\chi_{\Omega_n}\\
&< 2\sqrt{\sum_{j=k_n}^{m_n} d_j^2(\hat{y},u_j)^2}'\chi_{\Omega_n} + \frac{{\delta_n^{est}}'}{2}\chi_{\Omega_n} = 2 \sqrt{\sum_{j=k_n}^{m_n} (d_j\sigma_j)^{2(1+\nu')} (\xi',v_j)^2}\chi_{\Omega_n}+\frac{{\delta_n^{est}}'}{2}\chi_{\Omega_n} \\
&\le 2 {\rho'} (d_{k_n}\sigma_{k_n})^{\nu'+1}\chi_{\Omega_n} + \frac{{\delta_n^{est}}'}{2}\chi_{\Omega_n} \\
\Longrightarrow \frac{1}{d_{k_n}\sigma_{k_n}}\chi_{\Omega_n} &< \left(\frac{4\rho'}{{\delta_n^{est}}'}\right)^\frac{1}{\nu'+1}\chi_{\Omega_n} \le\left(\frac{16 d^{-\nu} \rho \sqrt{n}}{\gamma}\right)^\frac{1}{\nu'+1}\chi_{\Omega_n}=:b_n\chi_{\Omega_n},
\end{align*}

where we used that $d_1\sigma_1\ge d_2\sigma_2\ge ...$ by definition of $d_{j}$. So,

\begin{equation}\label{thrdp:eq7a}
\mathbb{P}\left( d_{k_n}\sigma_{k_n} > b_n^{-1}\right)\ge \mathbb{P}\left(\Omega_n\right)\to 1
\end{equation}

as $n\to\infty$, for $\Omega_n$ given in \eqref{thrdp:eq6}. Now we show, that for all $\varepsilon>0$ it holds that

\begin{equation}\label{thrdp:eq7a1} 
 \mathbb{P}\left(\sqrt{\sum_{j=1}^{k_n} \frac{(\bar{Y}_n-\hat{y},u_j)^2}{\sigma_j^2}}\le \frac{L}{2} \rho^\frac{1}{\nu'+1}\left(\frac{1}{\sqrt{n}}\right)^\frac{\nu'}{\nu'+1}\right) \ge 1-3\varepsilon 
\end{equation}

for $n$ large enough. Let $j_\varepsilon$ be such that $\sum_{j\ge j_\varepsilon}j^{-(1+\varepsilon_2)} \le \varepsilon\left(\frac{\gamma}{16d^{-\nu}}\right)^2$ and set $C_\varepsilon:= \sqrt{\sum_{j=1}^{j_\varepsilon} \frac{\E(Y_1-\hat{y},u_j)^2}{\sigma_j^2}}$. Define

\begin{align}\label{thrdp:eq7a2}
\Omega_{n,\varepsilon}=\left\{d_{k_n}\sigma_{k_n}\ge b_n^{-1},~\sqrt{\sum_{j=1}^{j_\varepsilon}\frac{(\bar{Y}_n-\hat{y},u_j)^2}{\sigma_j^2}}\le C_\varepsilon \sqrt{n}^{\varepsilon'-1},~\sqrt{\sum_{j=j_\varepsilon+1}^\infty d_j^2(\bar{Y}_n-\hat{y},u_j)^2}\le \left(\frac{\gamma}{16d^{-\nu}}\right)^\frac{1}{\nu'+1} \sqrt{n}^{-1}\right\}
\end{align}

 with $\varepsilon'=\frac{1}{2}\frac{1}{\nu'+1}$. Then,

\begin{align*}
\sqrt{\sum_{j=1}^{k_n}\frac{(\bar{Y}_n-\hat{y},u_j)^2}{\sigma_j^2}}\chi_{\Omega_{n,\varepsilon}}&\le \sqrt{\sum_{j=1}^{j_\varepsilon} \frac{(\bar{Y}_n-\hat{y},u_j)^2}{\sigma_j^2}}\chi_{\Omega_{n,\varepsilon}} + \sqrt{\sum_{j=j_\varepsilon+1}^{k_n} \frac{d_j^2(\bar{Y}_n-\hat{y},u_j)^2}{d_j^2\sigma_j^2}}\chi_{\Omega_{n,\varepsilon}}\\
&\le C_\varepsilon \sqrt{n}^{\varepsilon'-1} + \frac{1}{d_{k_n}\sigma_{k_n}} \sqrt{\sum_{j=j_\varepsilon+1}^{k_n} d_j^2(\bar{Y}_n-\hat{y},u_j)^2}\chi_{\Omega_{n,\varepsilon}}\\
&\le C_\varepsilon \sqrt{n}^{\varepsilon'-1} + b_n\chi_{\Omega_{n,\varepsilon}} \left(\frac{\gamma}{16C_2}\right)^\frac{1}{\nu'+1} \sqrt{n}^{-1}\\
&\le C_\varepsilon \sqrt{n}^{\varepsilon'-1} + \rho^\frac{1}{\nu'+1}\sqrt{n}^{\frac{1}{\nu'+1}-1}\le \frac{L}{2}\rho^\frac{1}{\nu'+1}\left(\frac{1}{\sqrt{n}}\right)^\frac{\nu'}{\nu'+1}
\end{align*}

for $n$ large enough, since $\varepsilon'<\frac{1}{\nu'+1}$. To prove \eqref{thrdp:eq7a1} it remains to show that $\mathbb{P}\left(\Omega_{n,\varepsilon}\right)\ge1-3\varepsilon$ for $n$ large enough. We apply Markov's inequality and obtain

\begin{align}\label{thrdp:eq7b}
\mathbb{P}\left(\sqrt{\sum_{j=1}^{j_\varepsilon}\frac{\left(\bar{Y}_n-\hat{y},u_j\right)^2}{\sigma_j^2}} > C_\varepsilon \sqrt{n}^{\varepsilon'-1}\right)&\le C_\varepsilon^{-2}n^{1-\varepsilon'}\sum_{j=1}^{j_\varepsilon}\frac{\E(\bar{Y}_n-\hat{y},u_j)^2}{\sigma_j^2} = C_\varepsilon^{-2}n^{-\varepsilon'}\sum_{j=1}^{j_\varepsilon}\frac{\E(Y_1-\hat{y},u_j)^2}{\sigma_j^2}=n^{-\varepsilon'}\le \varepsilon
\end{align}

for $n$ large enough by definition of $C_\varepsilon$. Further, by the choice of $j_\varepsilon$,

\begin{align}\label{thrdp:eq7c}
\mathbb{P}\left(\sqrt{\sum_{j=j_\varepsilon+1}^\infty d_j^2(\bar{Y}_n-\hat{y},u_j)^2} >\left(\frac{\gamma}{16d^{-\nu}}\right)^\frac{1}{\nu'+1} \sqrt{n}^{-1}\right)&\le \left(\frac{16d^{-\nu}}{\gamma}\right)^\frac{2}{\nu'+1}n\sum_{j=j_\varepsilon+1}^\infty d_j^2\E(\bar{Y}_n-\hat{y},u_j)^2\\\notag
 &= \left(\frac{16d^{-\nu}}{\gamma}\right)^\frac{2}{\nu'+1}\sum_{j=j_\varepsilon+1}^\infty d_j^2\E(Y_1-\hat{y},u_j)^2\\
 & \le \left(\frac{16d^{-\nu}}{\gamma}\right)^\frac{2}{\nu'+1} \sum_{j=j_\varepsilon+1}^\infty j^{-(1+\varepsilon_2)}\le \varepsilon.
\end{align}

Therefore, by \eqref{thrdp:eq7a}, \eqref{thrdp:eq7b} and \eqref{thrdp:eq7c} there holds $\mathbb{P}\left(\Omega_{n,\varepsilon}\right)\ge 1-3\varepsilon$ for $n$ large enough and thus \eqref{thrdp:eq7a1}. Since $\varepsilon>0$ was arbitrary it follows that

\begin{equation}\label{case1data}
\mathbb{P}\left( \sqrt{\sum_{j=1}^{k_n}\frac{(\bar{Y}_n-\hat{y},u_j)^2}{\sigma_j^2}} \le \frac{L}{2}\rho^\frac{1}{\nu'+1}\left(\frac{1}{\sqrt{n}}\right)^\frac{\nu'}{\nu'+1}\right)\to 1
\end{equation}

as $n\to\infty$. Finally, \eqref{case1approx} and \eqref{case1data} together prove Theorem \ref{opt2} for the case, that for all $k\in\N$ there is $j_k\ge k$ with $(\hat{y},u_{j_k})\neq 0$.

\subsubsection{Case 2}
Now we assume, that there exists $J\in\N$ such that $(\hat{y},u_j)=0$ for all $j\ge J$.
 We cannot expect a result similar to Lemma \ref{chapt1:lem2} (since $k_n$ will not converge to $\infty$ in probability), but the true solution $\hat{x}$ has arbitrarily large smoothness. Let $\varepsilon>0$ be such that $(d_J\sigma_J)^{-\varepsilon}\le 2$. We set $\nu''=\nu'+\varepsilon$ and use the representation from  \eqref{opt2:eq1}

 \begin{align}
\hat{x} &= \sum_{j=1}^\infty \sigma_j^\nu (\xi,v_j) = \sum_{j=1}^K (d_j\sigma_j)^{\nu''} (d_j\sigma_j)^{-\varepsilon}\frac{\sigma_j^{\nu-\nu'}}{d_j^{\nu'}}(\xi,v_j)= \sum_{j=1}^\infty (d_j\sigma_j)^{\nu''} (\xi'',v_j)v_j,
\end{align}

with $\xi'':=\sum_{j=1}^\infty (d_j\sigma_j)^{-\varepsilon}\frac{\sigma_j^{\nu-\nu'}}{d_j^{\nu'}}(\xi,v_j)v_j$ and $\|\xi''\| \le 2d^{-\nu}\rho =:\rho''$. We denote 

\begin{equation}\label{thrdp:eq8}
\Omega_n:=\left\{\frac{d_j}{2} \le d_{j,n}\le 2 d_j,~\forall l\le m_n,~\sqrt{\sum_{j=1}^{m_n}d_j^2(\bar{Y}_n-\hat{y},u_j)^2} \le n^\frac{\varepsilon'-1}{2},~ {\delta_n^{est}}'\le n^\frac{\varepsilon'-1}{2}\right\},
\end{equation}

 with $\varepsilon'=\frac{1}{2}\left(\frac{\nu''}{\nu''+1}-\frac{\nu'}{\nu'+1}\right)$. It holds that

\begin{equation}\label{thrdp:eq8a}
\mathbb{P}\left(\Omega_n\right)\to1
\end{equation}

as $n\to\infty$ because of \eqref{thrdp:eq2}, \eqref{thrdp:eq1} and 

\begin{align*}
\mathbb{P}\left(\sqrt{\sum_{j=1}^{m_n}d_j^2(\bar{Y}_n-\hat{y},u_j)^2} > n^\frac{\varepsilon'-1}{2}\right)
&\le n^{-\varepsilon'} \sum_{j=1}^{m_n}d_j^2\E(Y_1-\hat{y},u_j)^2\\
& \le n^{-\varepsilon'} \sum_{j=1}^{\infty} j^{-(1+\varepsilon_2)} \to 0
\end{align*}

as $n\to\infty$. For $n$ large enough (such that $m_n\ge J$) the approximation and discretisation error is
 
 \begin{align*}
 \sqrt{\sum_{j=k_n+1}^{\infty}(\hat{x},v_j)^2}\chi_{\Omega_n}&=\sqrt{\sum_{j=k_n+1}^{J}(\hat{x},v_j)^2}\chi_{\Omega_n}= \sqrt{\sum_{j=k_n+1}^J \frac{d_{j,n}^2(\hat{y},u_j)^2}{d_{j,n}^2\sigma_j^2}}\chi_{\Omega_n}\\
 &\le \frac{1}{d_{J,n}\sigma_J}\left(\sqrt{\sum_{j=k_n+1}^{J} d_{j,n}^2(\bar{Y}_n,u_j)^2} + \sqrt{\sum_{j=k_n+1}^{J} d_{j,n}^2(\bar{Y}_n-\hat{y},u_j)^2}\right)\chi_{\Omega_n}\\
  &\le \frac{1}{d_{J,n}\sigma_J}\left(\sqrt{\sum_{j=k_n+1}^{m_n} d_{j,n}^2(\bar{Y}_n,u_j)^2} + \sqrt{\sum_{j=1}^{m_n} d_{j,n}^2(\bar{Y}_n-\hat{y},u_j)^2}\right)\chi_{\Omega_n}\\
 &\le \frac{2}{d_J\sigma_J}\left({\delta_n^{est}}' + 2\sqrt{\sum_{j=1}^{m_n} d_j^2(\bar{Y}_n-\hat{y},u_j)^2}\right)\chi_{\Omega_n}\le 2^{\frac{1}{\varepsilon}+1}\left(n^{\frac{\varepsilon'-1}{2}} + 2n^{\frac{\varepsilon'-1}{2}}\right)\\
 &\le 2^{\frac{1}{\varepsilon}+3} n^\frac{\varepsilon'-1}{2}\le \frac{L}{2} \rho^\frac{1}{\nu'+1}\left(\frac{1}{\sqrt{n}}\right)^\frac{\nu'}{\nu'+1}
 \end{align*}

for $n$ large enough, where we used $d_J\sigma_J\ge 2^{-\frac{1}{\varepsilon}}$ in the sixth step, the definition of the discrepancy principle in the fifth step and 

$$\varepsilon'-1 = -\frac{\nu'}{\nu'+1} + \frac{1}{2}\frac{\nu''}{\nu''+1} + \frac{1}{2}\frac{\nu'}{\nu'+1} - 1 > -\frac{\nu'}{\nu'+1}+\frac{1}{2}+\frac{1}{2}-1=-\frac{\nu'}{\nu'+1}$$

 in the last step. We therefore obtain
 
 \begin{equation}\label{case2approx}
 \mathbb{P}\left(\sqrt{\sum_{j=k_n+1}^\infty(\hat{x},v_j)^2}\le\frac{L}{2}\rho^\frac{1}{\nu'+1}\left(\frac{1}{\sqrt{n}}\right)^\frac{\nu'}{\nu'+1}\right)\ge \mathbb{P}\left(\Omega_n\right)\to 1
 \end{equation}

as $n\to\infty$.
It remains to treat the data propagation error. We set $b_n:=\left(\frac{\gamma}{8\rho''\sqrt{n}}\right)^\frac{1}{\nu''+1}$. Let the deterministic sequence $(q_n)_{n\in\N}\subset \N$ be defined via

\begin{equation}\label{thrdp:eq8b}
q_n:=\min\left\{j\le m_n~: ~d_j\sigma_j\le b_n\right\}.
\end{equation}

If $b_n> d_j\sigma_j$ for all $j=1,...,m_n$ we set $q_n:=m_n$. Note that $q_n\to\infty$ as $n\to\infty$, since $b_n\to 0$. Define

\begin{align}\label{th4omega1}
\bar{\Omega}_n:&=\left\{\sqrt{\sum_{j=q_n}^{m_n} d_{j}^2(\bar{Y}_n-\hat{y},u_j)^2}< \gamma/\sqrt{8n}\right\}\cap \Omega_n
\end{align}

with $\Omega_n$ from \eqref{thrdp:eq8}. We claim that 

\begin{equation}\label{specialcase:eq1}
k_n\chi_{\bar{\Omega}_n} \le  q_n-1
\end{equation} 
 
as $n\to\infty$. Since \eqref{specialcase:eq1} trivially holds for $q_n=m_n$, we may assume that $d_{q_n}\sigma_{q_n}\le b_n$. Further,

\begin{align*}
\sqrt{\sum_{j=q_n}^{m_n} d_{j,n}^2(\bar{Y}_n,u_j)^2}\chi_{\bar{\Omega}_n} &\le \left(\sqrt{\sum_{j=q_n}^{m_n} d_{j,n}^2 (\hat{y},u_j)^2}+ \sqrt{\sum_{j=q_n}^{m_n} d_{j,n}^2(\bar{Y}_n-\hat{y},u_j)^2}\right)\chi_{\bar{\Omega}_n}\\
 &\le 2\left(\sqrt{\sum_{j=q_n}^{m_n}d_j^2(\hat{y},u_j)^2} + \sqrt{\sum_{j=q_n}^{m_n}d_j^2(\bar{Y}_n-\hat{y},u_j)^2}\right)\chi_{\bar{\Omega}_n}\\
 &\le 2\left(\sqrt{\sum_{j=q_n}^{m_n} (d_j\sigma_j)^{2(\nu''+1)}(\xi'',v_j)^2} + \frac{\gamma}{8\sqrt{n}}\right)\chi_{\bar{\Omega}_n}\\
 &\le 2\left( (d_{q_n}\sigma_{q_n})^{\nu''+1} \rho''+\frac{\gamma}{8\sqrt{n}}\right)\chi_{\bar{\Omega}_n}\\
 &\le \left(2 b_n^{\nu''+1} \rho''+\frac{\gamma}{4\sqrt{n}}\right)\chi_{\bar{\Omega}_n}\le \left(\frac{\gamma}{4\sqrt{n}}+\frac{\gamma}{4\sqrt{n}}\right)\chi_{\bar{\Omega}_n}
 \le {\delta_n^{est}}'
\end{align*}

and the claim \ref{specialcase:eq1} follows by the definition of $k_n$ in Algorithm 1. It holds that 

\begin{equation}\label{specialcase:eq2}
\mathbb{P}\left(\bar{\Omega}_n\right)\to 1
\end{equation}
as $n\to\infty$ because of \eqref{thrdp:eq8a} and

\begin{align*}
\mathbb{P}\left(\sqrt{\sum_{j=q_n}^{m_n} d_j^2(\bar{Y}_n-\hat{y},u_j)^2} >\frac{\gamma}{\sqrt{8 n}}\right) \le \frac{8n}{\gamma^2} \sum_{j=q_n}^{m_n} d_j^2\E(\bar{Y}_n-\hat{y},u_j)^2 \le \frac{8}{\gamma^2}\sum_{j=q_n}^{m_n} j^{-(1+\varepsilon_2)} \to 0 
\end{align*}

as $n\to\infty$. Finally, 
\begin{align*}
\sqrt{\sum_{j=1}^{k_n}\frac{(\bar{Y}_n-\hat{y},u_j)^2}{\sigma_j^2}}\chi_{\bar{\Omega}_n}&= \sqrt{\sum_{j=1}^{k_n} \frac{d_{j}^2(\bar{Y}_n-\hat{y},u_j)^2}{d_{j}^2\sigma_j^2}}\chi_{\bar{\Omega}_n} \le \frac{1}{d_{k_n}\sigma_{k_n}}\sqrt{\sum_{j=1}^{k_n}d_{j}^2(\bar{Y}_n-\hat{y},u_j)^2}\chi_{\bar{\Omega}_n}\\
&\le \frac{1}{b_n} \sqrt{\sum_{j=1}^{m_n}d_j^2(\bar{Y}_n-\hat{y},u_j)^2}\chi_{\bar{\Omega}_n} \le \frac{1}{b_n} n^\frac{\varepsilon'-1}{2}\le \left(\frac{8\rho''}{\gamma}\right)^\frac{1}{\nu''+1}\sqrt{n}^{\frac{1}{\nu''+1}+\varepsilon'-1}\\
&\le \left(\frac{16d^{-\nu}\rho}{\gamma}\right)^\frac{1}{\nu''+1}\sqrt{n}^{-\frac{1}{2}\left(\frac{\nu''}{\nu''+1}+\frac{\nu'}{\nu'+1}\right)}\le \frac{L}{2} \rho^\frac{1}{\nu'+1}\left(\frac{1}{\sqrt{n}}\right)^\frac{\nu'}{\nu'+1},
\end{align*}

for $n$ large enough, where we used $d_{k_n}\sigma_{k_n}\chi_{\bar{\Omega}_n}>b_n\chi_{\bar{\Omega}_n}$ (which follows from $k_n\chi_{\bar{\Omega}_n}\le q_n-1$ and \eqref{thrdp:eq8b}) in the third, the definition of $\Omega_n$ in the fourth, $\rho''=2d^{-\nu}\rho$ and $\varepsilon'=\frac{1}{2}\left(\frac{\nu''}{\nu''+1}-\frac{\nu'}{\nu'+1}\right)$ in the sixth and  $\nu''>\nu'$ in the last step. Thus 

\begin{equation}\label{case2data}
\mathbb{P}\left(\sqrt{\sum_{j=1}^{k_n}\frac{(\bar{Y}_n-\hat{y},u_j)^2}{\sigma_j^2}} \le \frac{L}{2}\rho^\frac{1}{\nu'+1}\left(\frac{1}{\sqrt{n}}\right)^\frac{\nu'}{\nu'+1}\right) \ge \mathbb{P}\left(\bar{\Omega}_n\right)\to 1
\end{equation}

as $n\to\infty$. Both \eqref{case2approx} and \eqref{case2data} together yield the claim of Theorem \ref{opt2} for the case, that there is a $J\in\N$ such that $(\hat{y},u_j)=0$ for all $j\ge J$.


\section{Numerical demonstration}\label{sec:4}

We now numerically test the modified discrepancy principle for the toy problem 'deriv2' from the open source MATLAB package Regutools \cite{Hansen:2007}. This is a discretisation of a 1d-Fredholm integral equation by means of the Galerkin approximation with box functions. The resulting discrete problem reads $A\hat{x}=b$, with $\hat{x},b\in\R^m$ and $A\in\R^{m\times m}$. We perturbed the right hand side component wise according to

$$z_i=b_i + \frac{\|b\|}{\sqrt{m}} \delta_i$$ 

where the $\delta_i$ are centralised i.i.d random variables following a generalised Pareto-distribution with finite fourth moment, but infinite higher moments (function gprnd(K,$\sigma$,$\theta$,m,n) with $K=1/5$, $\sigma=\sqrt{(1-K)^2(1-2K)}$ and $\theta=0$). We consider the symmetrised equation (as in Example \ref{int:ex1}) and set $K:=A^*A$ with i.i.d measurements $Y_1, Y_2,...$ distributed as 

$$Y_1 \stackrel{d}{=} A^*\begin{pmatrix} z_1 \\ ... \\ z_m \end{pmatrix}.$$

 We verify that the condition for the fourth moments in Theorem \ref{opt2} is satisfied. Indeed, it holds that

\begin{align*}
&\sup_{j} \frac{\E[\left(Y_1-\hat{y},v_j\right)^4]}{(\E[(Y_1-\hat{y},v_j)^2])^2}\\
 = &\sup_{j} \frac{\E\left[\left(A^*\begin{pmatrix} \delta_1\\\delta_2\\...\end{pmatrix},v_j\right)^4\right]}{\left(\E\left[\left(A^*\begin{pmatrix}\delta_1\\\delta_2\\...\end{pmatrix},v_j\right)^2\right]\right)^2} =\sup_{j} \frac{\E\left[\left(\begin{pmatrix} \delta_1\\\delta_2\\...\end{pmatrix},u_j\right)^4\right]}{\left(\E\left[\left(\begin{pmatrix}\delta_1\\\delta_2\\...\end{pmatrix},u_j\right)^2\right]\right)^2}\\
                    =&\sup_j \frac{\E\left[\left( \sum_{l}\delta_l(e_l,u_j)\right)^4\right]}{\left(\E\left[\left(\sum_{l}\delta_l(e_l,u_j)\right)^2\right]\right)^2} = \sup_j \frac{\E[\delta_1^4]\sum_{l}(e_l,u_j)^4 + 3\left(\E [\delta_1^2]\right)^2 \sum_{\substack{l,l'\\l\neq l'}}(e_l,u_j)^2(e_{l'},u_j)^2}{\left(\E[\delta_1^2]\sum_{l}(e_l,u_j)^2\right)^2}\\
                    \le& \frac{\E[\delta_1^4]}{\left(\E[\delta_1^2]\right)^2}\sup_j \frac{\sum_l(e_l,u_j)^4 + 3 \sum_{\substack{l,l'\\l\neq l'}} (e_l,u_j)^2(e_{l'},u_j)^2}{\sum_l(e_l,u_j)^4 + \sum_{\substack{l,l'\\l\neq l'}}(e_l,u_j)^2(e_{l'},u_j)^2} \le 4 \frac{\E[\delta_1^4]}{(\E[\delta_1^2])^2},
\end{align*}

where $e_1,e_2,...$ is the (orthonormal) Galerkin basis and $\hat{y} = A^*b$. We set the discretiation to $m=1000$ and approximated the singular value decomposition $(\sigma_j,u_j,v_j)$ of $A$  with the function 'csvd'. We used $n=[50, 500, ...,500000]$ measurements and compared the classical discrepancy principle to the modified one implemented in Algorithm 1 with $\varepsilon_1=0.5$ and $\varepsilon_2=0.5$ (large) and $\varepsilon_2=0.1$ (small). We calculated the relative errors for $100$ independent runs and visualised the results as box plots in Figure 1. We clearly see that the errors decay faster for the modified discrepancy principle. Moreover, in Table \ref{tab1} we compare the (relative) median error of the plain and modified discrepancy principle (this is the red bar in each of the boxes) to the square root of the minimax risk from Theorem $\ref{opt1}$, where we sampled the latter from the same data. We see that the error of the modified discrepancy principle is comparable to the minimax risk for smaller sample sizes. For larger sample sizes the minimax risk is better, which is consistent with the loss of $\varepsilon_2$ in the exponent of \eqref{int:eq1ba}.
\begin{figure}\label{fig11}
\centering
\includegraphics[width=0.9\linewidth]{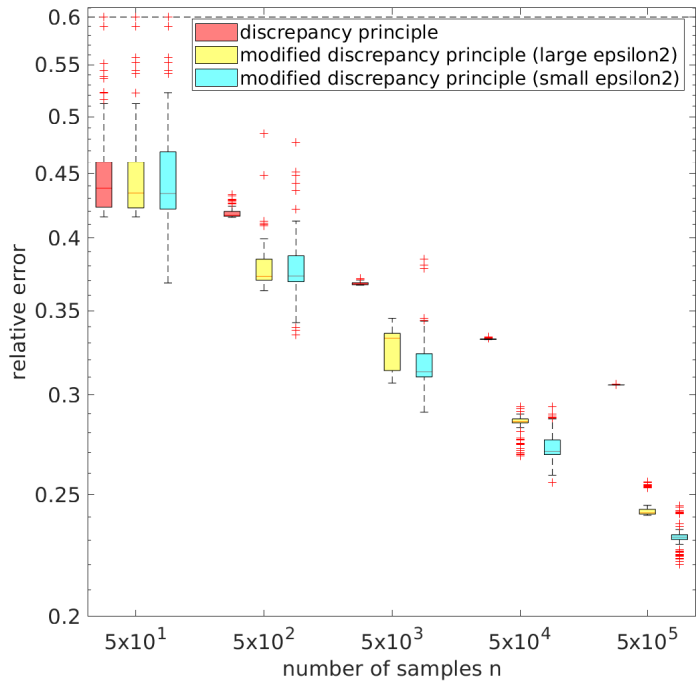} 
\caption{Relative errors for (modified) discrepancy principle. We see that the resulting errors decay faster for the modified discrepancy principle.}
\end{figure}

\begin{table}[hbt!]
\centering
\caption{Sampled median of the relative error of the plain and modified discrepancy principle (dp) and sampled relative minimax risk for different sample sizes.}\label{tab1}
\setlength{\tabcolsep}{4pt}
\begin{tabular}{c|ccc|c|}
\toprule
\multicolumn{1}{c}{sample size} & \multicolumn{1}{c}{plain dp} & \multicolumn{1}{c}{modified dp, $\varepsilon_2$ large} & \multicolumn{1}{c}{modified dp, $\varepsilon_2$ small} & \multicolumn{1}{c}{oracle}  \\
\cmidrule(l){2-2} \cmidrule(l){3-3} \cmidrule(l){4-4} \cmidrule(l){5-5}
$n$ & median error & median error & median error & square root of the minimax risk \\
5e1 &  4.36e-1 &  4.36e-1 &  4.4e-1  & 4.48e-1 \\
5e2 &  4.17e-1 &  3.75e-1 &  3.72e-1  & 3.69e-1 \\
5e3 &  3.67e-1 &  3.32e-1 &  3.14e-1  & 3.07e-1 \\
5e4 &  3.32e-1 &  2.85e-1 &  2.69e-1  & 2.54e-1  \\
5e5 &  3.05e-1 &  2.41e-1 &  2.32e-1 & 2.09e-1 \\
\bottomrule
\end{tabular}
\end{table}

\section{Concluding remarks}\label{sec:5}

In this work we have presented a modified discrepancy principle, which yields (almost) optimal convergence rates for arbitrary unknown error distributions, if one is able to repeat the measurements. This was achieved in estimating the variances of one measurement along the singular directions of the operator $K$, which was then used to rescale the measurements and the operator.

We restricted to linear mildly ill-posed problems and classical H\"older-type source conditions in Hilbert spaces, but the results probably can be extended to general degree of ill-posedness and general source conditions. A major drawback is, that the singular value decomposition of the operator needs to be known. It would be interesting to investigate whether the approach could be adapted to settings, where the singular valued decomposition is not given. 

\section*{Acknowledgements}
The author would like to thank Dr. Peter Math\'e for kindly hosting him in Berlin, where we worked out the basic idea of the presented approach.

\bibliographystyle{abbrv}
\bibliography{references}
\end{document}